    \newcommand{\nrm}[1]{\left\| #1 \right\|}
     \newcommand{\msfT}{\mathsf{T}}
    \newcommand\dt {{\Delta t}}  
    \newcommand{\qref}[1]{(\ref{#1})} 
    \newcommand{\iprd}[2]{\left( #1 , #2 \right)}
	\newtheorem{thm}{Theorem}[section]
	\newtheorem{prop}[thm]{Proposition}
	\newtheorem{cor}[thm]{Corollary}
	\newtheorem{lem}[thm]{Lemma}
	\newtheorem{rem}[thm]{Remark}
\begin{document}
	\title{An energy stable fourth order finite difference scheme for the Cahn-Hilliard equation}
	
	\author{
Kelong Cheng \thanks{School of Science, Southwest University of Science and Technology,  Mianyang, Sichuan 621010, P. R. China (zhengkelong@swust.edu.cn )}
\and
Wenqiang Feng\thanks{Department of Mathematics, The University of Tennessee, Knoxville, TN 37996 (wfeng1@utk.edu)}
\and		
Cheng Wang\thanks{Department of Mathematics, The University of Massachusetts, North Dartmouth, MA  02747 (Corresponding Author: cwang1@umassd.edu)}	
\and
Steven M. Wise\thanks{Department of Mathematics, The University of Tennessee, Knoxville, TN 37996 (swise1@utk.edu)} 	
}

	\maketitle
	\numberwithin{equation}{section}

	\begin{abstract}
In this paper we propose and analyze an energy stable numerical scheme for the Cahn-Hilliard equation, with second order accuracy in time and the fourth order finite difference approximation in space. In particular, the truncation error for the long stencil fourth order finite difference approximation, over a uniform numerical grid with a periodic boundary condition, is analyzed, via the help of discrete Fourier analysis instead of the the standard Taylor expansion. This in turn results in a reduced  regularity requirement for the test function. In the temporal approximation, we apply a second order BDF stencil, combined with a second order extrapolation formula applied to the concave diffusion term, as well as a second order artificial Douglas-Dupont regularization term, for the sake of energy stability. As a result, the unique solvability, energy stability are established for the proposed numerical scheme, and an optimal rate convergence analysis is derived in the $\ell^\infty (0,T; \ell^2) \cap \ell^2 (0,T; H_h^2)$ norm. A few numerical experiments are presented, which confirm the robustness and accuracy of the proposed scheme. 
	\end{abstract}
	
\noindent
{\bf Key words.} \, Cahn-Hilliard equation, long stencil fourth order finite difference approximation, second order accuracy in time, energy stability, optimal rate convergence analysis, preconditioned steepest descent iteration
	
\medskip
	
\noindent 
{\bf AMS Subject Classification} \, 35K35, 35K55, 65K10, 65M06, 65M12

	\section{Introduction}
In this article we consider an energy stable scheme for the Cahn-Hilliard equation, with second order temporal accuracy and long stencil fourth order finite difference spatial approximation. For any $\phi \in H^1 (\Omega)$, with $\Omega \subset R^d$ ($d=2$ or $d=3$), the energy functional is given by (see \cite{cahn58} for a detailed derivation): 
	\begin{equation}
	\label{AC energy}
E(\phi)=\int_{\Omega}\left(\frac{1}{4} \phi^4-\frac{1}{2} \phi^2 + \frac14 + \frac{\varepsilon^2}{2}|\nabla \phi|^2\right) d {\bf x} , 
	\end{equation} 
where the parmeter $\varepsilon$ controls the diffuse interface width.  In turn, the Cahn-Hilliard equation is realized as the $H^{-1}$ conserved gradient flow of the energy functional (\ref{AC energy}): 
	\begin{equation}
	\label{CH equation}
\phi_t=\Delta \mu , \quad   \mu := \delta_\phi E = \phi^3 - \phi - \varepsilon^2 \Delta \phi , 
	\end{equation}
where $\mu$ is the chemical potential and periodic boundary conditions are assumed. Subsequently, the energy dissipation law follows from an inner product with~\eqref{CH equation} by $\mu$: $ E' (t)=-\int_{\Omega} |\nabla \mu |^2 d {\bf x} \le 0$. Meanwhile, because it is constructed as an $H^{-1}$ gradient flow, the equation is mass conservative:  $\int_\Omega \partial_t \phi  d{\bf x} = 0$.

The finite difference and finite element schemes to the CH equation have been extensively studied;  see the related references \cite{barrett99, diegel16, diegel17, elliott93, elliott89b, feng04, furihata01, guo16, he07, kay06, kay07, khiari07, kim03, WZG15, wise07}, et cetera. Meanwhile, it is observed that, most finite difference works in the existing literature have been focused on the standard second order centered difference approximation; among the finite element works, most computations are based on either linear or quadratic polynomial elements, since the implementation of higher order accurate elements are expected to be very complicated. On the other hand, a fourth order and even more accurate spatial discretization is highly desirable, for the sake of its ability to capture the more detailed structure with a reduced computational cost. Of course, the spectral/pseudo-spectral approximation is one choice; see the related references~\cite{cheng16a, LiD2016b}, etc. However, the spectral/pseudo-spectral differentiation turns out to be a global operator in space, and this feature leads to great challenges in the numerical implementations, especially in the case of an implicit treatment of nonlinear terms; and also, spectral/pseudo-spectral differentiation itself is involved with $O (N^d \ln N)$ float point calculations, instead of $O (N^d)$ scale for the finite difference ones. 

  This article is concerned with fourth order finite difference numerical approximation to the Cahn-Hilliard equation, with a theoretically justified energy stability and convergence. Among the existing fourth order finite difference works, it is worthy of mentioning~\cite{LiJ2012a}, in which the authors considered a second order accurate in time, fourth order compact difference scheme. The error estimate was derived, while the energy stability has not been theoretically proved. Similar works could also be found in~\cite{LeeC14, LiY16, Song15}. Meanwhile, we notice that, the compact difference approximation has always been involved with an additional discrete Poisson-like operator, therefore one more Poisson solver has to be included in the computational cost. Instead, if the long stencil fourth order finite difference is used in the numerical scheme, such an additional Poisson solver could be saved. In addition, the truncation error for the fourth order finite difference approximation, over a uniform numerical grid with a periodic boundary condition, is analyzed in this article. Instead of the classical maximum norm estimate for the truncation error, based on the standard Taylor expansion for the test function, we provide a discrete $\ell^2$ estimate. As a result, the regularity requirement is reduced to an $H^6$ bound for the test function, compared with the $C^6$ bound in the classical approach. In the 1-D case, such an estimate could be derived with an application of Taylor expansion in the integral form. In the 2-D and 3-D cases, the discrete and continuous Fourier expansions for the test function and its higher order derivatives are applied to obtain the discrete $\ell^2$ estimate for the truncation error, in which both the eigenvalue analysis and aliasing error control have to be considered. 
  
  In turn, we apply the long stencil fourth order difference discretization in space, combined with a second order accurate, energy stable temporal algorithm. There have been extensive studies on the second order (in time) energy stable numerical approach for the Cahn-Hilliard equation, based on the modified Crank-Nicolson version; see the related references~\cite{cheng16a, diegel17, diegel16, du91, guo16, han15}. As an alternate numerical approach with energy stability, a careful numerical experiment in a recent finite element work~\cite{yan17} reveals that a modified backward differentiation formula (BDF) method could also preserve the desired energy stability for the Cahn-Hilliard flow. Furthermore, since the nonlinear term in the BDF method has a stronger convexity than the one in the Crank-Nicolson approach, a 20 to 25 percent improvement of the computational efficiency is generally expected. In addition, due to the long stencil operators involved in the fourth order spatial discretization, such an efficiency improvement is expected to be even more prominent. Consequently, we use the second order BDF concept to derive second order temporal accuracy, but modified so that the concave diffusion term is treated explicitly. Such an explicit treatment for the concave part of the chemical potential ensures the unique solvability of the scheme without sacrificing energy stability.  An additional term $A \dt \Delta (\phi^{k+1} - \phi^k)$ is added, which represents a second order Douglas-Dupont-type regularization, and a careful calculation shows that energy stability is guaranteed, provided a mild condition $A \ge \frac{1}{16}$ is enforced. As a result of this energy stability, a uniform in time $H^1$ bound for the numerical solution becomes available, with the discrete $H^1$ norm defined at an appropriate discrete level. 
  
    With such an $H_h^1$ bound at hand, we are able to derive a discrete $\ell^6$ bound for the numerical solution, uniform in time, with the help of discrete Sobolev embedding. Such an embedding analysis could be derived from a straightforward calculation; instead, a discrete Fourier analysis, combined with certain aliasing error estimate have to involved in the derivation. In turn, such an $\ell^6$ estimate enables one to obtain an optimal rate ($O (\dt^2 + h^4)$) convergence analysis for the proposed numerical scheme, in the $\ell^\infty (0,T; \ell^2) \cap \ell^2 (0,T; H_h^2)$ norm. 
 	
	The outline of the paper is given as follows. In Section~\ref{sec:difference operator} we provide a discrete $\ell^2$ truncation error estimate for the long stencil fourth order finite difference approximation over a uniform numerical grid. The fully discrete scheme is formulated in Section~\ref{sec:numerical scheme}, with the main theoretical results stated. The proof of these results is given by Section~\ref{sec:proof}. The main numerical results are presented in Section~\ref{sec:numerical results}.  Finally, some concluding remarks are made in Section~\ref{sec:conclusion}.

	\section{The long stencil difference operator and the local truncation error estimate}\label{sec:difference operator}
	
   The long stencil fourth order finite difference formula can be derived by the Taylor expansion for the test function. In more detail, the fourth order approximations to the first and second order derivatives, over a uniform numerical grid, are given by 
	\begin{eqnarray} 
{\cal D}_{x,(4)}^1 f_i &=& \tilde{D}_x ( 1 - \frac{h^2}{6} D_x^2 ) f_i 
  = \frac{ f_{i-2} - 8 f_{i-1} + 8 f_{i+1} - f_{i+2} }{12 h} 
  = f' (x_i) + O (h^4) ,
	\label{FD-4th-1-1} 
	\\
{\cal D}_{x,(4)}^2 f_i &=& D_x^2 ( 1 - \frac{h^2}{12} D_x^2 ) f_i  = \frac{ - f_{i-2} + 16 f_{i-1} - 30 f_i + 16 f_{i+1} - f_{i+2} }{12 h^2 }
	\nonumber 
	\\
&=& f'' (x_i) + O (h^4) ,
	\label{FD-4th-1-2} 
	\end{eqnarray} 
where $\tilde{D}_x$ and $D_x^2$ are the standard centered difference approximation to the first and second order derivatives, respectively. See the detailed derivations in the related references \cite{Fornberg1988, Fornberg1998, Iserles1996, BO1978}, etc. These long stencil fourth order finite difference approximations have been extensively applied to different types of partial differential equations (PDEs), such as incompressible Boussinesq equation \cite{LWJ2003, WLJ2004}, three-dimensional geophysical fluid models \cite{LW2008, STWW2007}, the Maxwell equation \cite{FWWY2008}.

  In the consistency analysis for these long stencil fourth order finite difference schemes, the classical truncation error estimate implies that 
\begin{eqnarray} 
  &&
  \nrm{ \tau_1 }_\infty \le C h^4 \nrm{ f }_{C^5} ,  \quad 
  \mbox{with} \, \, \, (\tau_1)_i = {\cal D}_{x,(4)}^1 f_i - f' (x_i) ,   
  \label{truncation-4th-1} 
\\
  &&
   \nrm{ \tau_2 }_\infty \le C h^4 \nrm{ f }_{C^6} ,  \quad 
  \mbox{with} \, \, \, (\tau_2)_i = {\cal D}_{x,(4)}^2 f_i - f'' (x_i) ,   
  \label{truncation-4th-2} 
\end{eqnarray} 
in which the $C^m$ regularity of the test function is involved. Meanwhile, for most time-dependent PDEs, a max-norm bound of the truncation error (in the $\nrm {\, \cdot\, }_\infty$ norm) is not necessary in the numerical analysis.  Indeed, a discrete $\ell^2$ bound of the truncation error is typically sufficient in the convergence analysis. Subsequently, a natural question arises: could a discrete $\ell^2$ estimate be available for the truncation error associated with the fourth order finite difference approximation, which only requires an $H^m$ regularity for the test function? 
	
  This important issue is studied in this section. We begin with the analysis for the 1-D case. 
  
\subsection{A discrete $\ell^2$ truncation error estimate over a 1-D numerical grid}

Consider a 1-D domain $\Omega = (0, L)$, a uniform grid $x_i=i h$, 
with $h = L/N^*$, $0 \le i \le N^* -1$. For any discrete grid function $g$, which is evaluated at grid points $x_i$, $0 \le i \le N^* -1$, the discrete $\ell^2$ norm is introduced as 
\begin{equation} 
  \nrm{g}_2^2 =  h \sum_{i=0}^{N^* -1} g_i^2 .  \label{def-1d-1}
\end{equation}  

\begin{prop} \label{thm-1d}
   For $f \in H^6_{per} (\Omega) := \left\{ f \in H^6 (\Omega): \mbox{f is periodic} \right\}$, we have 
\begin{equation} 
  \nrm{ \tau }_2  \le C h^4 \nrm{f}_{H^6} ,  \quad 
  \mbox{with} \, \, \, \tau_i = {\cal D}_{x,(4)}^2 f_i - f'' (x_i) , \label{est-1d-1}
\end{equation} 
in which the fourth order finite difference operator 
${\cal D}_{x,(4)}^2$ is given by \qref{truncation-4th-2}. 
\end{prop}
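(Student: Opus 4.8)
The plan is to bypass the pointwise maximum-norm bound \qref{truncation-4th-2}, which forces $f \in C^6$, and instead keep the truncation error in an integral-remainder form that only sees $f^{(6)}$ through its $L^2$ norm. First I would record the exactness of the stencil. Writing the scheme as ${\cal D}_{x,(4)}^2 f_i = \frac{1}{12 h^2} \sum_{j=-2}^{2} c_j f(x_i + j h)$ with $(c_{-2}, c_{-1}, c_0, c_1, c_2) = (-1, 16, -30, 16, -1)$, a direct computation of the moments $\sum_j c_j j^m$ shows that ${\cal D}_{x,(4)}^2$ reproduces $f''(x_i)$ exactly for every polynomial of degree at most $5$: the odd moments vanish by symmetry, and the relevant even moments give $\sum_j c_j = 0$, $\frac{1}{2}\sum_j c_j j^2 = 12$, $\sum_j c_j j^4 = 0$, while $\sum_j c_j j^6 = -96 \ne 0$ produces the leading error.

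Next I would insert Taylor's formula with integral remainder to fifth order for each stencil value $f(x_i + j h)$ about $x_i$. Because of the exactness just noted, all polynomial contributions cancel in $\tau_i = {\cal D}_{x,(4)}^2 f_i - f'' (x_i)$, leaving only a linear combination of the order-five remainders. After the rescaling $t = x_i + h s$ this collapses to the compact representation
\begin{equation}
\tau_i = h^4 \int_{-2}^{2} \Phi(s)\, f^{(6)}(x_i + h s)\, ds ,
\end{equation}
where $\Phi$ is a fixed, bounded, piecewise-polynomial kernel supported on $[-2,2]$, independent of $h$ and $i$. This representation is exactly where the reduced regularity enters: in one dimension $H^6_{per}(\Omega) \hookrightarrow C^5(\overline{\Omega})$, so $f^{(5)}$ is absolutely continuous with $f^{(6)} \in L^2$, and Taylor's formula with integral remainder is legitimate; periodicity lets me extend $f$ to the whole line and treat the wrap-around stencils near the ends of the grid on the same footing.

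Finally the estimate follows from Cauchy--Schwarz at each grid point, followed by a bounded-overlap summation. Applying Cauchy--Schwarz to the $s$-integral and undoing the substitution gives $\tau_i^2 \le C h^7 \int_{x_{i-2}}^{x_{i+2}} |f^{(6)}(t)|^2 \, dt$. Summing against the weight $h$ from \qref{def-1d-1} and observing that every point $t$ lies in at most five of the windows $[x_{i-2}, x_{i+2}]$ — a constant overlap independent of $h$ — I obtain $\nrm{\tau}_2^2 \le C h^8 \nrm{f^{(6)}}_{L^2}^2 \le C h^8 \nrm{f}_{H^6}^2$, which is the claimed bound. I do not expect a serious obstacle here; the one genuinely new idea, compared with the classical argument behind \qref{truncation-4th-2}, is to refrain from taking absolute values inside the remainder and instead exploit the integral representation together with the finite overlap of the stencil supports, which is precisely what trades the $C^6$ requirement for an $H^6$ one. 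The only points demanding care are the justification of the integral remainder at the $H^6$ level and the periodic bookkeeping, both routine.
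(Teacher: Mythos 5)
Your proposal is correct and follows essentially the same route as the paper's proof: Taylor expansion with integral remainder (legitimate since $f^{(6)}\in L^2$), a Cauchy--Schwarz/H\"older bound on the remainder integrals giving $|\tau_i|^2 \le C h^7 \int_{x_{i-2}}^{x_{i+2}} |f^{(6)}(t)|^2\, dt$, and a bounded-overlap summation of the windows $[x_{i-2},x_{i+2}]$ to reach $\nrm{\tau}{2} \le C h^4 \nrm{f^{(6)}}{L^2}$. Your repackaging of the four separate remainder integrals into a single rescaled Peano kernel $\Phi(s)$ supported on $[-2,2]$, together with the explicit moment check and the remark that $H^6_{per}\hookrightarrow C^5$ justifies the expansion, is a tidier presentation of the same argument rather than a different one.
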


\begin{proof} 
   An application of Taylor's series in integral form shows that 
\begin{eqnarray} 
  f_{i+1} &=& f_i + h f' (x_i) +\frac{h^2}{2} f'' (x_i) 
    + \frac{h^3}{6} f''' (x_i) +  \frac{h^4}{24} f^{(4)} (x_i)  
    + \frac{h^5}{120} f^{(5)} (x_i)   \nonumber 
\\
  && 
   +  \int_{x_i}^{x_{i+1}} \frac{f^{(6)} (t)}{120} (x_{i+1} -t)^5  dt ,  
   \label{FD-4th-Taylor-I-1-1} 
\\
  f_{i-1} &=& f_i - h f' (x_i) +\frac{h^2}{2} f'' (x_i) 
   - \frac{h^3}{6} f''' (x_i) +  \frac{h^4}{24} f^{(4)} (x_i)  
   - \frac{h^5}{120} f^{(5)} (x_i)   \nonumber 
\\
  && 
    +  \int_{x_i}^{x_{i-1}} \frac{f^{(6)} (t)}{120} (x_{i-1} -t)^5  dt  ,  
    \label{FD-4th-Taylor-I-1-2}
\\
  f_{i+2} &=& f_i + 2 h f' (x_i) +\frac{4 h^2}{2} f'' (x_i) 
    + \frac{8 h^3}{6} f''' (x_i) +  \frac{16 h^4}{24} f^{(4)} (x_i)  
   + \frac{32 h^5}{120} f^{(5)} (x_i)   \nonumber 
\\
  && 
   +  \int_{x_i}^{x_{i+2}} \frac{f^{(6)} (t)}{120} (x_{i+2} -t)^5  dt ,  
   \label{FD-4th-Taylor-I-1-3}  
\\
  f_{i-2} &=& f_i - 2h f' (x_i) +\frac{4 h^2}{2} f'' (x_i) 
   - \frac{8 h^3}{6} f''' (x_i) +  \frac{16 h^4}{24} f^{(4)} (x_i)   
    - \frac{32 h^5}{120} f^{(5)} (x_i)   \nonumber 
\\
  && 
    +  \int_{x_i}^{x_{i-2}} \frac{f^{(6)} (t)}{120} (x_{i-2} -t)^5  dt  .  
     \label{FD-4th-Taylor-I-1-4}
\end{eqnarray} 
These expansions in turn yield that 
\begin{eqnarray} 
  &&
   \tau_i = {\cal D}_{x,(4)}^2 f_i - f'' (x_i) 
 = \frac{-f_{i-2} + 16 f_{i-1} - 30 f_i + 16 f_{i+1} - f_{i+2}}{12h^2}   - f'' (x_i) \nonumber 
\\
  &=& 
   \frac{1}{1440 h^2}   
   \Bigl( 16 \int_{x_i}^{x_{i+1}} f^{(6)} (t) (x_{i+1} -t)^5  dt  
  -  16 \int_{x_{i-1}}^{x_i} f^{(6)} (t)  (x_{i-1} -t)^5  dt   \nonumber 
\\
  &&
  -  \int_{x_i}^{x_{i+2}}  f^{(6)} (t)  (x_{i+2} -t)^5  dt 
  +  \int_{x_{i-2}}^{x_i}  f^{(6)} (t)  (x_{i-2} -t)^5  dt   \Bigr) .  
    \label{FD-4th-Taylor-I-3}
\end{eqnarray}
Meanwhile, an application of H\"older's inequality implies that 
\begin{eqnarray} 
  &&
   \left| \int_{x_i}^{x_{i+1}} f^{(6)} (t) (x_{i+1} -t)^5  dt  \right|  \nonumber 
\\
   &\le&  \nrm{   f^{(6)} (t)  }_{\ell^2 (x_i, x_{i+1}) }  
   \cdot  \nrm{  (x_{i+1} -t)^5 }_{\ell^2 (x_i, x_{i+1}) }  
  = \frac{1}{\sqrt{11} } h^{11/2}  \nrm{   f^{(6)} (t) }_{\ell^2 (x_i, x_{i+1}) }  ,  
  \label{FD-4th-Taylor-I-4}
\\
  &&
   \left| \int_{x_{i-1}}^{x_i}  f^{(6)} (t) (x_{i-1} -t)^5  dt  \right| 
   \le \frac{1}{\sqrt{11}} h^{11/2}  \nrm{  f^{(6)} (t)  }_{\ell^2 (x_{i-1}, x_i) }  ,  
  \label{FD-4th-Taylor-I-4-2}
\\
  &&
   \left| \int_{x_i}^{x_{i+2}} f^{(6)} (t) (x_{i+2} -t)^5  dt  \right| 
   \le \frac{32 \sqrt{2}}{\sqrt{11} } h^{11/2}  
   \nrm{  f^{(6)} (t)  }_{\ell^2 (x_i, x_{i+2}) }  ,  
  \label{FD-4th-Taylor-I-4-3}
\\
  &&
   \left|  \int_{x_{i-2}}^{x_i}  f^{(6)} (t) (x_{i-2} -t)^5  dt  \right|   
   \le \frac{32 \sqrt{2}}{\sqrt{11} } h^{11/2}    
   \left\|   f^{(6)} (t)  \right\|_{\ell^2 (x_{i-2}, x_i) }  .  
  \label{FD-4th-Taylor-I-4-4}
\end{eqnarray}
Its combination with \qref{FD-4th-Taylor-I-3} leads to 
\begin{eqnarray} 
  \left|   \tau_i   \right| 
  \le  \frac{1 + 2 \sqrt{2} }{45 \sqrt{11} } h^{7/2}    
   \left\|   f^{(6)} (t)  \right\|_{\ell^2 (x_{i-2}, x_{i+2}) }  .  
  \label{FD-4th-Taylor-I-5}
\end{eqnarray}
In turn, a discrete $\ell^2$ estimate of the truncation error $\tau$ is obtained: 
\begin{eqnarray} 
   &&
   h \sum_{i=0}^{N^* -1}  \left|   \tau_i   \right|^2 
  \le C h^8  \sum_{i=0}^{N^* -1}   
  \nrm{   f^{(6)} (t)  }^2_{\ell^2 (x_{i-2}, x_{i+2}) }
  \le 4 C h^8    
  \left\|   f^{(6)} (t)  \right\|^2_{\ell^2 (0, L) } ,    \nonumber 
\\
  &&  \quad  
  \mbox{i.e.}  \quad \nrm{ \tau }_2 \le C h^4  \nrm{  f  }_{H^6 (0, L) } , 
   \label{FD-4th-Taylor-I-6}
\end{eqnarray}
in which the second step comes from the following obvious fact: 
\begin{eqnarray} 
  \sum_{i=0}^{N^* -1}   
  \nrm{  f^{(6)} (t)  }^2_{\ell^2 (x_{i-2}, x_{i+2}) }
  \le 4 \nrm{  f^{(6)} (t) }^2_{\ell^2 (0, L) } .  
   \label{FD-4th-Taylor-I-7}
\end{eqnarray}
Proposition~\ref{thm-1d} is proven.  
\end{proof}

\begin{rem} 
  A detailed calculation reveals that the standard Taylor expansion results in the classical truncation error estimate as \qref{truncation-4th-2}, with a $C^6$ regularity requirement for the test function; while the Taylor expansion in the integral form gives an improved truncation error estimate \qref{est-1d-1}, in which only an $H^6$ regularity requirement is set for the test function. 
\end{rem} 

\begin{rem} 
  For the long stencil fourth order finite difference approximation \qref{FD-4th-1-1} to the first order derivative, the following discrete $\ell^2$ truncation error estimate can be derived in a similar manner: 
\begin{equation} 
   \nrm{ \tau_1 }_2 \le C h^4 \nrm{ f }_{H^5} ,  \quad 
  \mbox{with} \, \, \, (\tau_1)_i = {\cal D}_{x,(4)}^1 f_i - f' (x_i) .   
  \label{truncation-4th-1-2} 
\end{equation} 
The details are skipped for brevity. 
\end{rem}

\subsection{The 2-D and 3-D analyses}

Consider a 2-D domain $\Omega = (0, L)^2$ with a uniform grid 
$(x_i , y_j) = (i h , j h)$, $h = L/N^*$, $0 \le i , j \le 2 N$, and periodic boundary conditions in both directions. For simplicity of presentation in the Fourier analysis, it is assumed that $N^* = 2N+1$.  We also make a 2-D extension of the fourth order long stencil finite difference operator as $\Delta_{h,(4)} = {\cal D}_{x,(4)}^2 + D_{y, (4)}^2$: 
\begin{eqnarray} 
  \hspace{-0.35in}  &&
   {\cal D}_{x,(4)}^2 f_{i,j} = D_x^2 ( 1 - \frac{h^2}{12} D_x^2 ) f_{i,j} 
  = \frac{ - f_{i-2,j} + 16 f_{i-1,j} - 30 f_{i,j} + 16 f_{i+1,j} - f_{i+2,j} }{12 h^2 } , 
   \label{FD-4th-2d-1} 
\\
  \hspace{-0.35in}  && 
  {\cal D}_{y,(4)}^2 f_{i,j} = D_y^2 ( 1 - \frac{h^2}{12} D_y^2 ) f_{i,j} 
  = \frac{ - f_{i,j-2} + 16 f_{i,j-1} - 30 f_{i,j} + 16 f_{i,j+1} - f_{i,j+2} }{12 h^2 } . 
  \label{FD-4th-2d-2} 
\end{eqnarray} 
In addition, for any discrete grid function $g$, which is evaluated at 2-D grid points $(x_i, y_j)$, $0 \le i , j \le N^* -1$, the discrete $\ell^2$ norm is defined as 
\begin{equation} 
  \nrm{g}_2^2 = h^2 \sum_{i,j=0}^{N^* -1} g_{i,j}^2  .  \label{def-2d-1}
\end{equation}  
 
\begin{prop} \label{thm-2d}
  For $f \in H^6_{per} (\Omega)$, we have 
\begin{equation} 
   \nrm{ \tau }_2  \le C h^4 \nrm{f}_{H^6} ,  \quad 
  \mbox{with} \, \, \, \tau_{i,j} = \Delta_{h,(4)} f_{i,j} 
 - (\Delta f) (x_i, y_j) , \label{est-2d-1}
\end{equation} 
in which $C$ only depends on $L$. 
\end{prop}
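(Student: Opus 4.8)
The plan is to work entirely in the frequency domain, as anticipated for the multi-dimensional case, rather than attempting to sum the one-dimensional estimate of Proposition~\ref{thm-1d} over grid slices. First I would note that since $\Delta_{h,(4)} = {\cal D}_{x,(4)}^2 + {\cal D}_{y,(4)}^2$ and $\Delta = \partial_x^2 + \partial_y^2$, the truncation error splits as $\tau_{i,j} = \tau^x_{i,j} + \tau^y_{i,j}$, with $\tau^x_{i,j} = {\cal D}_{x,(4)}^2 f_{i,j} - (\partial_x^2 f)(x_i,y_j)$ and $\tau^y$ defined symmetrically; by the triangle inequality and the $x\leftrightarrow y$ symmetry it suffices to bound $\nrm{\tau^x}_2$. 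Expanding the periodic test function in its continuous Fourier series $f = \sum_{k,l\in\mathbb{Z}}\hat f_{k,l}\,\phi_{k,l}$, with $\phi_{k,l}(x,y)=e^{2\pi\mathrm{i}(kx+ly)/L}$ and $\xi_k = 2\pi k/L$, each mode is an eigenfunction of the discrete operator on the grid: ${\cal D}_{x,(4)}^2\phi_{k,l}\big|_{\mathrm{grid}} = \lambda_k\,\phi_{k,l}\big|_{\mathrm{grid}}$ with symbol $\lambda_k = \frac{1}{12h^2}\bigl(32\cos(\xi_k h) - 2\cos(2\xi_k h) - 30\bigr)$, while $\partial_x^2\phi_{k,l} = -\xi_k^2\phi_{k,l}$.

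The eigenvalue (symbol) estimate is the first quantitative ingredient. A Taylor expansion of the trigonometric polynomial shows that $g(\theta):=32\cos\theta - 2\cos 2\theta - 30 + 12\theta^2$ vanishes to sixth order at $\theta=0$, so that $|g(\theta)|\le C\theta^6$ uniformly on $[-\pi,\pi]$; since $\lambda_k+\xi_k^2 = g(\xi_k h)/(12h^2)$, the resolved modes $|k|\le N$ (for which $|\xi_k h|\le\pi$) obey the key bound $|\lambda_k + \xi_k^2|\le Ch^4\xi_k^6$, while for all $k$ one has the crude bound $|\lambda_k|\le C h^{-2}$. I would then split $f = P_N f + Q_N f$ into the resolved part $P_N f = \sum_{|k|,|l|\le N}$ and the high-frequency remainder $Q_N f$. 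For $P_N f$ the grid modes $\{\phi_{k,l}|_{\mathrm{grid}}\}_{|k|,|l|\le N}$ are orthogonal, so discrete Parseval and the symbol estimate give directly $\nrm{({\cal D}_{x,(4)}^2-\partial_x^2)P_N f}_2^2 = L^2\sum_{|k|,|l|\le N}|\lambda_k+\xi_k^2|^2|\hat f_{k,l}|^2 \le Ch^8\sum \xi_k^{12}|\hat f_{k,l}|^2\le Ch^8\nrm{f}_{H^6}^2$.

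The high-frequency remainder is where the aliasing control enters, and I expect this to be the main obstacle. Writing $({\cal D}_{x,(4)}^2-\partial_x^2)Q_N f$ as a discrete Fourier series and grouping modes into alias classes $(k,l)\equiv(k',l')\pmod{N^*}$ (using $2\pi N^*/L = 2\pi/h$, so that within a class $|\xi_{k,l}|\gtrsim|(m,n)|/h$ for the nonzero class index $(m,n)$), discrete Parseval yields $\nrm{({\cal D}_{x,(4)}^2-\partial_x^2)Q_N f}_2^2 = L^2\sum_{|k'|,|l'|\le N}\bigl|\sum_{(k,l)\equiv(k',l'),\ \mathrm{high}}(\lambda_{k'}+\xi_k^2)\hat f_{k,l}\bigr|^2$, where I have used that the discrete symbol is alias-periodic, $\lambda_k=\lambda_{k'}$, whereas the continuous symbol $\xi_k^2$ is not. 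Each inner sum is then estimated by Cauchy--Schwarz against the weight $(1+|\xi_{k,l}|^2)^{-6}$: the data factor reproduces the $H^6$ norm, while the geometric factor is a lattice tail sum of the form $\sum_{(m,n)\ne 0}(m^2+n^2)^{-p}$ (with $p=4$ arising from the $\xi_k^2$ contribution and $p=6$ from the $\lambda_{k'}$ contribution after inserting $|\lambda_{k'}|\le Ch^{-2}$), finite precisely because we are in two dimensions and of order $h^8$ after accounting for the powers of $h$.

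Summing over the alias classes then collapses the data factors back to $\sum_{\mathrm{high}}(1+|\xi_{k,l}|^2)^6|\hat f_{k,l}|^2\le \nrm{f}_{H^6}^2$, giving $\nrm{({\cal D}_{x,(4)}^2-\partial_x^2)Q_N f}_2\le Ch^4\nrm{f}_{H^6}$. Combining the resolved and high-frequency estimates, together with the symmetric bound for $\tau^y$, produces $\nrm{\tau}_2\le Ch^4\nrm{f}_{H^6}$ with $C=C(L)$. The two delicate points to watch are the uniform sixth-order vanishing of the symbol $g$ on the entire Nyquist range $[-\pi,\pi]$, and the convergence of the two-dimensional lattice tail sums; the latter is exactly what distinguishes this Fourier argument from the failed attempt to sum the one-dimensional slice estimate, which would require pointwise-in-$y$ traces of $\partial_x^6 f$ and hence more than $H^6$ regularity. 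In the 3-D case the same scheme applies, with the only essential change being that the analogous lattice sums $\sum_{(m,n,p)\ne 0}(m^2+n^2+p^2)^{-q}$ must still converge, which dictates the admissible weight exponents.
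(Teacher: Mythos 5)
Your proposal is correct and follows essentially the same route as the paper's own proof: continuous Fourier expansion of $f$, an eigenvalue (symbol) consistency estimate on the resolved modes $|k| \le N$ (the paper's Lemma~\ref{lem-1}), and control of the aliasing contribution by pairing the aliased coefficients against an $H^6$-type weight so that the error reduces to the convergent lattice sum $\sum_{(m,n)\ne(0,0)} \bigl( (|m|-\tfrac12)^2 + (|n|-\tfrac12)^2 \bigr)^{-p}$ (the paper's Lemma~\ref{lem-2}). The differences are organizational rather than substantive: you split $f = P_N f + Q_N f$ instead of splitting the aliased coefficient sum into its $(k_1,l_1)=(0,0)$ and $(k_1,l_1)\ne(0,0)$ parts, you derive the symbol bound from sixth-order vanishing of $g(\theta)$ plus compactness of $[-\pi,\pi]$ rather than the paper's explicit trigonometric inequalities, and you use Cauchy--Schwarz with the weight $(1+|\xi_{k,l}|^2)^{-6}$ where the paper bounds each aliased coefficient by the square root of its alias-class energy $I_{k,l}$.
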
 

\subsection{Review of Fourier series and interpolation}

For $f(x,y) \in \ell^2 (\Omega)$, $\Omega= (0,L)^2$,  with Fourier series  
\begin{eqnarray} 
 f(x,y) =   \sum_{k,l=-\infty}^{\infty}  
 \hat{f}_{k,l} {\rm e}^ {2 \pi {\rm i} ( k x + l y)/L },  \quad   
  \mbox{with} \quad 
    \hat{f}_{k,l}  =  \int_{\Omega}  f(x,y)  
    {\rm e}^{-2 \pi {\rm i} ( k x + ly) /L } d x dy , 
\end{eqnarray}
its truncated series is defined as the projection onto the space 
${\cal B}^N$ of trigonometric polynomials in $x$ and $y$ of degree up to $N$,  
given by 
	\begin{equation} 
{\cal P}_N f(x,y) =   \sum_{k,l=-N}^{N} \hat{f}_{k,l} {\rm e}^ {2 \pi {\rm i} ( k x + l y) /L } . 
	\end{equation}
Meanwhile, an interpolation operator ${\cal I}_N$ is introduced if one wants an
approximation which matches the function at a given set of points. 
Given a uniform numerical grid with $(2N+1)$ points in each dimension
and a grid function ${\mathbf f}, $ where ${\mathbf f}_{i,j} = f(x_i, y_j) $,  the Fourier interpolation of the function is defined by 
	\begin{equation} 
{\cal I}_N f (x,y) =   \sum_{k,l=-N}^{N}    (\hat{f}_c^N)_{k,l} {\rm e}^ {2 \pi {\rm i} ( k x + ly) /L }, \label{interpolation} 
	\end{equation}
where the $(2N+1)^2$ pseudospectral coefficients $ (\hat{f}_c^N)_{k,l}$ 
are be computed based on  
the interpolation requirement
	\[
f(x_i, y_j) =   {\cal I}_N f (x_i, y_j)
	\]
at the $(2N+1)^2$ equidistant points \cite{Boyd2001, GO1977, HGG2007}.  Note that these \emph{collocation} coefficients can be efficiently computed using the fast Fourier transform (FFT). Also, observe that these interpolation coefficients are not equal to the actual Fourier coefficients.  The difference between them is known as the aliasing error. In general, ${\cal P}_N f(x,y)  \ne {\cal I}_N f(x,y) $, and even ${\cal P}_N f(x_i, y_j)  \ne {\cal I}_N f(x_i, y_j) $, except of course in the case that $ f \in {\cal B}^N$.

The consistency and accuracy of the Fourier projection and interpolation have been well established in the existing literature.  As long as $f\in H^m_{per}(\Omega)$, 
the convergence of the derivatives of the projection and interpolation is given by 
	\begin{eqnarray}  
 \|\partial^\alpha f  - \partial^\alpha {\cal P}_N f  \| 
  &\leq&  C \| f^{(m)} \| h^{m-|\alpha|} , \quad 
\mbox{for} \, \, \, 0 \le |\alpha| \le m ,
	\nonumber 
	\\
  \|\partial^\alpha f  - \partial^\alpha {\cal I}_N f  \| 
  &\leq&  C \| f \|_{H^m} h^{m-|\alpha|} , \quad 
\mbox{for} \, \, \, 0 \le |\alpha| \le m ,  \, m > \frac{d}{2} ,   
 	\label{spectral-approximation}
	\end{eqnarray}
where $\| \cdot \|$ denotes the standard $\ell^2$ norm and $d$ is the dimension.  For more details, see the  discussion of approximation theory by Canuto and  Quarteroni \cite{canuto82} .

\subsection{Proof of Proposition~\ref{thm-2d} } 

Assume that $f \in H^6_{per}$ has a Fourier expansion 
\begin{eqnarray} 
  f (x,y) = \sum_{k,l=-\infty}^{\infty}   
   \hat{f}_{k,l} \mbox{exp} \left( 2 \pi {\rm i} ( k x + ly )/L \right)  
    . \label{expansion-2d-f-1}
\end{eqnarray}
The Parseval equality shows that 
\begin{equation} 
  \nrm{f}_{2}^2 =  L^2 \sum_{k,l=-\infty}^\infty 
    \left| \hat{f}_{k,l} \right|^2 .  \label{est-2d-f-2}
\end{equation}
Similarly, for the derivatives, we have 
\begin{eqnarray} 
   \Delta^{m_0} f (x,y) = \sum_{k,l=-\infty}^\infty \Bigl( - ( \frac{2 \pi}{L} )^2 ( k^2 + l^2 )^2   \Bigr)^{m_0} \hat{f}_{k,l} \mbox{exp} \left( 2 \pi {\rm i} ( k x + ly )/L \right) , 
   \label{expansion-2d-f-2}
\end{eqnarray}
for $m_0 \ge 1$, and the corresponding Parseval equality gives 
\begin{equation} 
  \nrm{\Delta^{m_0} f}_{2}^2 =  L^2
    \sum_{k,l=-\infty}^\infty \left(  \frac{4 \pi^2}{L^2}  ( k^2 + l^2 )
    \right)^{2 m_0}     \left| \hat{f}_{k,l} \right|^2 .  
    \label{est-2d-f-3}
\end{equation}

In particular, we see that 
\begin{eqnarray} 
  \nrm{\Delta f}_{2}^2 &=& L^2
    \sum_{k,l=-\infty}^\infty \left(  \frac{4 \pi^2}{L^2}  ( k^2 + l^2 )
    \right)^2      \left| \hat{f}_{k,l} \right|^2 , \nonumber 
\\ 
   \nrm{\Delta^3 f}_{2}^2 &=&  L^2
    \sum_{k,l=-\infty}^\infty \left(  \frac{4 \pi^2}{L^2}  ( k^2 + l^2 )
    \right)^6    \left| \hat{f}_{k,l} \right|^2 .  \label{est-2d-f-4}
\end{eqnarray}

  We also note that for any (periodic) discrete grid function over $(x_i , y_j)$, $0 \le i , j \le 2 N$, with a discrete Fourier expansion 
\begin{equation} 
  g_{i,j} = \sum_{k,l=-N}^{N}  
   \hat{g}_{k,l} \mbox{exp} \left( 2 \pi {\rm i} ( k x_i + l y_j ) /L \right)  , 
   \label{expansion-2d-g-1}
\end{equation}
the corresponding discrete Parseval equality is valid:
\begin{equation} 
  h^2 \sum_{i,j=0}^{2N} | g_{i,j} |^2 = L^2 \sum_{k,l=-N}^{N} 
   \left| \hat{g}_{k,l} \right|^2  .  \label{est-2d-g-2}
\end{equation}

  Meanwhile, we observe that the discrete Fourier expansion of $f$ over 
the uniform grid $(x_i, y_j)$, $0 \le i , j \le 2 N$, is not the projection of (\ref{expansion-2d-f-1}), due to the appearance of aliasing errors.  
A more careful calculation reveals that 
\begin{eqnarray} 
  f_{i,j} = \sum_{k,l=-N}^{N} \tilde{\hat{f}}_{k,l} 
  \mbox{exp} \left( 2 \pi {\rm i} ( k x_i + l y_j ) /L \right)  , 
   \quad \mbox{with}  \, \, \, 
   \tilde{\hat{f}}_{k,l} = \sum_{k_1, l_1 =-\infty}^{\infty} \hat{f}_{k+k_1 N^*, l + l_1 N^*} .
   \label{expansion-2d-f-dis-1}
\end{eqnarray}
(See the related derivations in \cite{tadmor86}.)  In turn, taking the centered difference ${\cal D}_{x,(4)}^2$, ${\cal D}_{y,(4)}^2$ on $f$ and making use of the fact that 
$\mbox{exp} \left( 2 \pi {\rm i} ( k x_i + l y_j ) /L \right)$ is also an eigenfunction of the discrete operator $\Delta_{h,(4)}$ lead to the following formulas:
	\begin{equation}     
   \label{expansion-2d-f-dis-2}
  \Delta_{h,(4)} f_{i,j} = 
     \sum_{k,l=-N}^{N} \left( \lambda_{kx,(4)} + \lambda_{ly,(4)} \right) 
     \tilde{\hat{f}}_{k,l} \mbox{exp} \left( 2 \pi {\rm i} ( k x_i + l y_j ) /L \right)  ,  
	\end{equation}
where
	\begin{equation} 
  \lambda_{kx,(4)} = \lambda_k - \frac{h^2}{12} \lambda_k^2 ,  \, \, 
  \lambda_{ly,(4)} = \lambda_l - \frac{h^2}{12} \lambda_l^2 , \, \, 
  \lambda_k = \frac{- 4 \mbox{sin}^2 (k \pi h/L)}{h^2} , 
    \quad \lambda_l = \frac{- 4 \mbox{sin}^2 (l \pi h/L)}{h^2} . 
	\end{equation}
Moreover, a differentiation of the Fourier expansion (\ref{expansion-2d-f-1}) leads to 
\begin{equation} 
  \Delta f (x,y) = \sum_{k,l=-\infty}^\infty 
  \left( \frac{-4 \pi^2}{L^2} (k^2+ l^2 ) \right)  
   \hat{f}_{k,l}  \mbox{exp} \left( 2 \pi {\rm i}  (k x_i + l y_j ) / L \right)   , 
   \label{expansion-2d-f-3}
\end{equation}
and its interpolation at $(x_i, y_j)$ gives 
\begin{eqnarray} 
  (\Delta f)_{i,j} = 
    \sum_{k,l=-N}^{N} \tilde{\hat{f}}_{k,l}^{(2)} \mbox{exp} \left( 2 \pi {\rm i}  (k x_i + l y_j ) / L \right), 
\end{eqnarray}
with
\begin{eqnarray} 
   \tilde{\hat{f}}_{k,l}^{(2)}    
   = \sum_{k_1,l_1=-\infty}^{\infty} \left( \frac{-4 (k + k_1 N^*) ^2\pi^2
     -4 (l + l_1 N^*)^2 \pi^2}{L^2}  \right)  \hat{f}_{k+k_1 N^*,l + l_1 N^*}  .     
   \label{expansion-2d-f-dis-3}
\end{eqnarray}

  Therefore, the difference between (\ref{expansion-2d-f-dis-2}) and 
(\ref{expansion-2d-f-dis-3}) gives 
\begin{eqnarray} 
  \tau_{i,j} = 
     \sum_{k=-N}^{N} \left( 
     \left( \lambda_{kx,(4)} + \lambda_{ly,(4)} \right) \tilde{\hat{f}}_{k,l}  
     - \tilde{\hat{f}}_{k,l}^{(2)} \right)  
     \mbox{exp} \left( 2 \pi {\rm i}  (k x_i + l y_j ) / L \right)  .     
   \label{expansion-2d-f-dis-4}
\end{eqnarray} 
As a result, an application of discrete Parseval equality yields 
\begin{eqnarray} 
  \nrm{ \tau }_2^2 = L^2  \sum_{k,l=-N}^{N} \left| 
     \left( \lambda_{kx,(4)} + \lambda_{ly,(4)} \right) \tilde{\hat{f}}_{k,l}  
     - \tilde{\hat{f}}_{k,l}^{(2)} \right|^2  .   
   \label{est-2d-f-dis-2}
\end{eqnarray}  
Moreover, a detailed comparison between (\ref{expansion-2d-f-dis-1}) and 
(\ref{expansion-2d-f-dis-3}) results in 
\begin{eqnarray} 
   \left( \lambda_{kx,(4)} + \lambda_{ly,(4)} \right) \tilde{\hat{f}}_{k,l} - \tilde{\hat{f}}_{k,l}^{(2)}   
   &=& \left( \left( \lambda_{kx,(4)} + \frac{4 k^2\pi^2}{L^2} \right) 
    +  \left( \lambda_{ly,(4)} + \frac{4 l^2\pi^2}{L^2} \right)  \right) \hat{f}_{k,l}  \nonumber 
\\
  & & 
   + \sum_{ k_1,l_1=-\infty \atop (k_1,l_1) \ne (0,0)  }^{\infty} \biggl\{  \left( \lambda_{kx,(4)} 
   + \frac{4 (k+ k_1 N^*)^2 \pi^2}{L^2} \right) 
   \nonumber 
\\
  &&  
   +  \left( \lambda_{ly,(4)}   
   + \frac{4 (l+ l_1 N^*)^2 \pi^2}{L^2} \right)  \biggr\}
   \hat{f}_{k+k_1 N^* , l + l_1 N^* }  .     
   \label{est-2d-coefficient-1}
\end{eqnarray}
The estimates of the above terms are given by the following lemmas. The proofs will be provided in the appendix. 

\begin{lem}  \label{lem-1}
  We have, for some $C_1>0$,
\begin{equation} 
  \left| \lambda_{kx,(4)} + \frac{4 k^2 \pi^2}{L^2} \right| 
  \le C_1 h^4 \left( \frac{2 k \pi}{L} \right)^6 ,   \quad 
  \left| \lambda_{ly,(4)} + \frac{4 l^2\pi^2}{L^2} \right| 
  \le C_1 h^4 \left( \frac{2 l \pi}{L} \right)^6 ,  
  \quad \forall \, \, 0 \le k,l \le N .  \qquad 
    \label{est-2d-coefficient-2}
\end{equation}
\end{lem}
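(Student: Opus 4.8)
The plan is to collapse the stated matrix/eigenvalue bound into a single scalar inequality in the variable $\theta := k\pi h/L$, and then control that scalar by a Taylor expansion about $\theta = 0$. First I would substitute the definitions $\lambda_k = -4\sin^2(k\pi h/L)/h^2$ and $\lambda_{kx,(4)} = \lambda_k - \frac{h^2}{12}\lambda_k^2$. Writing $\theta = k\pi h/L$ and using the identity $\theta^2/h^2 = k^2\pi^2/L^2$, a direct computation yields the exact relation
\[
\lambda_{kx,(4)} + \frac{4k^2\pi^2}{L^2} = \frac{4}{h^2}\,G(\theta), \qquad G(\theta) := \theta^2 - \sin^2\theta - \tfrac13\sin^4\theta .
\]
Thus the whole claim reduces to an estimate of the form $|G(\theta)| \le C\,\theta^6$ on the relevant range of $\theta$; once this is in hand, substituting $\theta^6 = (k\pi h/L)^6 = h^6(k\pi/L)^6$ turns the prefactor $4/h^2$ into $h^4$, and converting $(k\pi/L)^6$ into $(2k\pi/L)^6$ costs only the absolute numerical factor $2^{6}$, which is absorbed into $C_1$. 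The statement for $\lambda_{ly,(4)}$ is identical by symmetry.

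Next I would pin down the range of $\theta$. Since $0 \le k \le N$ and $h = L/N^*$ with $N^* = 2N+1$, we have $\theta = k\pi h/L \le N\pi/(2N+1) < \pi/2$, so it suffices to prove $|G(\theta)| \le C\,\theta^6$ uniformly on the fixed compact interval $[0,\pi/2]$. The point here is that this interval is independent of $h$ and $k$, so the resulting constant is universal. The function $G$ is smooth and even, hence all its odd-order derivatives vanish at the origin. Expanding $\sin^2\theta = \theta^2 - \frac{\theta^4}{3} + \frac{2\theta^6}{45} - \cdots$ and $\frac13\sin^4\theta = \frac{\theta^4}{3} - \frac{2\theta^6}{9} + \cdots$, the $\theta^2$ and $\theta^4$ contributions cancel exactly and one obtains $G(\theta) = \frac{8}{45}\theta^6 + O(\theta^8)$. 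In particular $G(0) = G'(0) = \cdots = G^{(5)}(0) = 0$, which both confirms the sixth-order scaling that encodes the fourth-order accuracy of the long-stencil operator and sets up the remainder estimate.

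Finally I would convert this vanishing-to-order-six behavior into the uniform bound. Taylor's theorem with integral remainder gives $G(\theta) = \frac{1}{5!}\int_0^\theta (\theta - s)^5 G^{(6)}(s)\,ds$, so that $|G(\theta)| \le \frac{\theta^6}{6!}\sup_{[0,\pi/2]}|G^{(6)}|$; the supremum is finite because $G^{(6)}$ is continuous on the compact interval $[0,\pi/2]$. Equivalently, one may observe that $\theta \mapsto G(\theta)/\theta^6$ extends continuously to $[0,\pi/2]$ (with value $8/45$ at the origin) and is therefore bounded. Either route gives $|G(\theta)| \le C\theta^6$ with an explicit $C$, and tracing the constants back through the reduction yields an admissible $C_1$; for instance $C_1 = \sup_{[0,\pi/2]}|G^{(6)}|/11520$ works. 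I do not expect any serious obstacle: the only delicate points are the algebraic cancellation of the two lowest Taylor coefficients of $G$ and keeping the constant uniform in $h$ and $k$, the latter being automatic once the problem is phrased entirely in terms of $\theta$ on a fixed compact set.
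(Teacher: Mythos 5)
Your proof is correct, and it rests on the same reduction as the paper's: substitute the eigenvalue formulas, set $\theta = k\pi h/L$, note that $h = L/(2N+1)$ forces $0 \le \theta < \pi/2$, and reduce the whole claim to the scalar estimate $|G(\theta)| \le C\,\theta^6$ on a fixed compact interval, where $G(\theta) = \theta^2 - \sin^2\theta - \frac13 \sin^4\theta$. Where you genuinely diverge is in how that scalar estimate is established. The paper proves it by two explicit one-sided arguments: first, $\sin^2 t + \frac13\sin^4 t \le t^2$ for all $t \ge 0$ (via a comparison of second derivatives), which gives the sign information $G(\theta) \ge 0$; second, the minorant $\sin t \ge t - t^3/6 \ge 0$ on $[0,\pi/2]$, which after expansion yields the explicit polynomial majorant $G(\theta) \le \frac79\theta^6 - \frac29\theta^8 + \frac{2}{81}\theta^{10} - \frac{1}{972}\theta^{12}$, hence the claim with fully computable constants. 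You instead verify that the Maclaurin expansion of $G$ begins at order six, $G(\theta) = \frac{8}{45}\theta^6 + O(\theta^8)$ (your cancellation of the $\theta^2$ and $\theta^4$ terms is correct), and then conclude by Taylor's theorem with integral remainder, or equivalently by boundedness of $G(\theta)/\theta^6$ on a compact set. Your route is shorter, handles both signs of $G$ in one stroke, and makes transparent that the lemma is nothing more than sixth-order vanishing of $G$ at the origin; what it gives up is explicitness (your constant involves $\sup_{[0,\pi/2]}|G^{(6)}|$, which you leave unevaluated) and the non-negativity $G \ge 0$. That sign fact is not needed for the present lemma, but the paper reuses it in the proof of Lemma~\ref{lem-2}, where the bound $|\lambda_{kx,(4)}| \le 4k^2\pi^2/L^2$ is invoked; with your approach that statement would require a separate, though easy, verification.
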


\begin{lem} \label{lem-2}
  We have 
\begin{eqnarray}
   \sum_{k,l=-N}^{N} 
   \left| \sum_{k_1,l_1=-\infty \atop (k_1,l_1) \ne (0,0)}^{\infty}  \left( \lambda_{kx,(4)} 
   + \frac{4 (k+ k_1 N^*)^2 \pi^2}{L^2} \right) 
   \hat{f}_{k+k_1 N^* , l + l_1 N^* }  \right|^2 
   \le C_2 h^8 \nrm{f}_{H^6}^2 ,  \nonumber 
\\
  \sum_{k,l=-N}^{N} 
   \left| \sum_{k_1,l_1=-\infty\atop(k_1,l_1) \ne (0,0)}^{\infty}  \left( \lambda_{ly,(4)} 
   + \frac{4 (l+ l_1 N^*)^2 \pi^2}{L^2} \right) 
   \hat{f}_{k+k_1 N^*, l + l_1 N^*}  \right|^2 
   \le C_2 h^8 \nrm{f}_{H^6}^2 ,    \label{est-2d-coefficient-3}
\end{eqnarray}
where $C_2>0$ is a constant that only depends on $L$. 
\end{lem}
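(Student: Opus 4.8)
The plan is to establish the first (square-bracketed) estimate in \qref{est-2d-coefficient-3}; the second then follows verbatim after interchanging the two coordinate directions. First I would exploit the $N^*$-periodicity of the discrete symbol: since $h = L/N^*$ gives $N^*\pi h/L = \pi$, one has $\lambda_{k+k_1 N^*} = \lambda_k$ and hence $\lambda_{kx,(4)} = \lambda_{Kx,(4)}$ with $K := k + k_1 N^*$. Writing also $M := l + l_1 N^*$, the coefficient multiplying $\hat{f}_{K,M}$ is then exactly the high-mode ``symbol error''
\[
a_K := \lambda_{Kx,(4)} + \frac{4 K^2 \pi^2}{L^2} ,
\]
evaluated at the aliased wavenumber $K$, so the inner sum is $\sum_{(k_1,l_1)\ne(0,0)} a_K \hat{f}_{K,M}$.

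Next I would apply the Cauchy--Schwarz inequality with the $H^6$ weight $w_{K,M} := (K^2+M^2)^6$, which is strictly positive on the relevant index set since $(k_1,l_1)\ne(0,0)$ forces $(K,M)\ne(0,0)$:
\[
\Bigl| \sum_{(k_1,l_1)\ne(0,0)} a_K \hat{f}_{K,M} \Bigr|^2 \le A_{k,l} \cdot \sum_{(k_1,l_1)\ne(0,0)} w_{K,M} |\hat{f}_{K,M}|^2 , \quad A_{k,l} := \sum_{(k_1,l_1)\ne(0,0)} \frac{|a_K|^2}{w_{K,M}} .
\]
Summing the second factor over $-N \le k,l \le N$ and using that $(k,l,k_1,l_1)\mapsto(K,M)$ is a bijection onto $\mathbb{Z}^2$ (because $\{-N,\dots,N\}$ is a complete residue system modulo $N^*$), with the constraint $(k_1,l_1)\ne(0,0)$ deleting precisely the low modes with $-N\le K,M\le N$, collapses it to $\sum_{K,M}(K^2+M^2)^6|\hat{f}_{K,M}|^2$, which equals $C(L)\nrm{\Delta^3 f}_2^2 \le C(L)\nrm{f}_{H^6}^2$ by \qref{est-2d-f-4}. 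Thus the whole claim reduces to proving the uniform bound $A_{k,l}\le C h^8$.

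The hard part will be this uniform estimate of $A_{k,l}$, and the crucial insight is to use the \emph{correct} growth of $a_K$ at high modes. Since $|\lambda_K|\le 4/h^2$, an elementary computation gives $|\lambda_{Kx,(4)}|\le \tfrac{16}{3h^2}$ for every integer $K$, whence $|a_K| \le \tfrac{16}{3h^2} + \tfrac{4\pi^2}{L^2}K^2$; note this grows only like $O(K^2)$ at large $K$, rather than the $O(h^4 K^6)$ consistency rate of Lemma~\ref{lem-1} valid for low modes, and it is precisely this slower growth that renders the lattice sum convergent. The second ingredient is the aliasing spectral gap: for $(k_1,l_1)\ne(0,0)$ with $|k|,|l|\le N<N^*/2$ one checks $|K|\ge |k_1|N^*/2$ when $k_1\ne0$ (and likewise for $M$), so that $K^2+M^2 \ge \tfrac14 (N^*)^2 (k_1^2+l_1^2)$ in all cases.

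Feeding these in, splitting $|a_K|^2 \le \tfrac{C}{h^4} + C K^4$ and using $K^4\le (K^2+M^2)^2$, I would bound
\[
A_{k,l} \le \frac{C}{h^4}\sum_{(k_1,l_1)\ne(0,0)} \frac{1}{(K^2+M^2)^6} + C\sum_{(k_1,l_1)\ne(0,0)} \frac{1}{(K^2+M^2)^4} .
\]
The spectral gap converts these into $\tfrac{C}{h^4(N^*)^{12}}\sum(k_1^2+l_1^2)^{-6} + \tfrac{C}{(N^*)^{8}}\sum(k_1^2+l_1^2)^{-4}$; both two-dimensional lattice sums converge, and since $(N^*)^{-1}=h/L$ each resulting prefactor is $O(h^8)$, giving $A_{k,l}\le C h^8$ uniformly in $(k,l)$. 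Combining this with the $H^6$-controlled second factor yields the stated bound $C_2 h^8\nrm{f}_{H^6}^2$ with $C_2$ depending only on $L$, completing the proof of Lemma~\ref{lem-2}.
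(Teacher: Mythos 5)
Your proof is correct, and it rests on the same core mechanism as the paper's: the aliasing gap $K^2+M^2\ge \tfrac14 (N^*)^2(k_1^2+l_1^2)$ for $(k_1,l_1)\ne(0,0)$, together with convergent two-dimensional lattice sums measured against the $H^6$ weight, which is where the $O(h^8)$ factor comes from in both arguments. The organization differs, though. You run an $\ell^2$--$\ell^2$ pairing: Cauchy--Schwarz with $w_{K,M}=(K^2+M^2)^6$ reduces the lemma to the uniform bound $A_{k,l}\le Ch^8$, and the weighted tails are resummed to $\nrm{f}_{H^6}^2$ through the bijection $(k,l,k_1,l_1)\mapsto (K,M)$. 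The paper runs an $\ell^\infty$--$\ell^1$ pairing instead: writing $\nrm{\Delta^3 f}_2^2=\sum_{k,l}I_{k,l}$ as a sum of aliasing blocks, it bounds each coefficient by $|\hat f_{K,M}|\le w_{K,M}^{-1/2}\sqrt{I_{k,l}/L^2}$, sums a single $\ell^1$ series of symbol-to-weight ratios to obtain the per-mode bound $C^*h^4\sqrt{I_{k,l}/L^2}$, and only then sums the blocks over $(k,l)$. Relatedly, the paper exploits $\lambda_{kx,(4)}\le 0$ and $|\lambda_{kx,(4)}|\le 4k^2\pi^2/L^2\le 4K^2\pi^2/L^2$ to get the clean symbol bound $|a_K|\le 4\pi^2K^2/L^2$, so a single lattice sum (exponent $\beta_0=2$, with half-integer shifts) suffices; your cruder bound $|a_K|\le \tfrac{16}{3h^2}+\tfrac{4\pi^2K^2}{L^2}$ forces a split into two lattice sums (exponents $6$ and $4$), both of which converge, so nothing is lost. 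Your opening observation that the discrete symbol is $N^*$-periodic---so that the coefficient is exactly the symbol error evaluated at the aliased wavenumber $K$---is a nice conceptual simplification that the paper leaves implicit.
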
 

  A direct consequence of Lemma~\ref{lem-1} shows that 
\begin{eqnarray} 
   \sum_{k,l=-N}^{N}  \left( \lambda_{kx,(4)} + \frac{4 k^2\pi^2}{L^2} \right)^2
    \left| \hat{f}_{k,l} \right|^2    
     \le \sum_{k,l=-N}^{N} C_1^2 h^8 \left( \frac{2 k \pi}{L} \right)^{12}  
     \left| \hat{f}_{k,l} \right|^2 
     \le \tilde{C}_1 h^8 \nrm{f}_{H^6}^2 ,   \nonumber 
\\
  \sum_{k,l=-N}^{N}  \left( \lambda_{ly,(4)} + \frac{4 l^2\pi^2}{L^2} \right)^2
    \left| \hat{f}_{k,l} \right|^2    
     \le \sum_{k,l=-N}^{N} C_1^2 h^8 \left( \frac{2 l \pi}{L} \right)^{12}  
     \left| \hat{f}_{k,l} \right|^2 
     \le \tilde{C}_1 h^8 \nrm{f}_{H^6}^2 , \label{est-2d-f-dis-3}
\end{eqnarray}
with $\tilde{C}_1 = \frac{C_1^2}{L^2}$, 
in which we used the estimate (\ref{est-2d-f-4})
\begin{equation} 
    \sum_{k,l=-N}^{N}  
   \left( \left( \frac{2 k \pi}{L} \right)^{12}  + \left( \frac{2 l \pi}{L} \right)^{12}  \right) 
   \left| \hat{f}_{k,l} \right|^2  
    \le \sum_{k,l=-\infty}^{\infty}  
   \left( \left( \frac{2 k \pi}{L} \right)^{12}  + \left( \frac{2 l \pi}{L} \right)^{12}  \right) 
   \left| \hat{f}_{k,l} \right|^2 
    \le \frac{1}{L^2} \nrm{\Delta^3 f}^2 .  \label{est-2d-f-4-alt}
\end{equation}

  A combination of (\ref{est-2d-coefficient-1}), (\ref{est-2d-f-dis-3}) 
and Lemma \ref{lem-2} indicates that 
\begin{eqnarray} 
&& \hspace{-1in}\sum_{k,l=-N}^{N}  
    \left|  \left( \lambda_{kx,(4)} + \lambda_{ly,(4)} \right) \tilde{\hat{f}}_{k,l} 
    - \tilde{\hat{f}}_{k,l}^{(2)}  \right|^2  \nonumber 
\\
    &\le& 4 \sum_{k,l=-N}^{N}  \Biggl\{ 
     \left( \left( \lambda_{kx,(4)} + \frac{4 k^2\pi^2}{L^2} \right)^2  
   + \left( \lambda_{ly,(4)} + \frac{4 l^2\pi^2}{L^2} \right)^2  \right)
    \left| \hat{f}_{k,l} \right|^2   \nonumber 
\\
  & & 
      + \left| \sum_{k_1,l_1=-\infty \atop (k_1,l_1) \ne (0,0)}^{\infty}  \left( \lambda_{kx,(4)} 
   + \frac{4 (k+ k_1 N^*)^2 \pi^2}{L^2} \right) 
   \hat{f}_{k+k_1 N^*, l + l_1 N^*}  \right|^2   \nonumber 
\\
  & & 
   + \left| \sum_{k_1,l_1=-\infty\atop (k_1,l_1) \ne (0,0)}^{\infty}  \left( \lambda_{ly,(4)} 
   + \frac{4 (l+ l_1 N^*)^2 \pi^2}{L^2} \right) 
   \hat{f}_{k+k_1 N^*, l + l_1 N^*}  \right|^2   \Bigg\}  \nonumber  
\\
   &\le& \tilde{C}_2 h^8  \nrm{f}_{H^6}^2 , 
   \label{est-2d-f-dis-4}
\end{eqnarray}
where $\tilde{C}_2 = 8 \tilde{C}_1 + 8 C_2$. Observe that the Cauchy inequality
	\[|a_1 + a_2 + a_3 + a_4|^2 
\le 4 ( | a_1 |^2 + | a_2 |^2 + | a_3 |^2 + | a_4 |^2)
	\]
was applied in the first step. 

  Finally, a substitution of (\ref{est-2d-f-dis-4})  into (\ref{est-2d-f-dis-2}) results in (\ref{est-2d-1}), with the constant $C$ given by 
\begin{equation} 
  C = \sqrt{\tilde{C}_2 } L .  \label{def-2d-constant}
\end{equation}
This completes the proof of Proposition~\ref{thm-2d}. 

\subsection{An extension to a 3-D domain} 
  
Similarly, consider a 3-D domain $\Omega = (0, L)^3$ with a uniform grid 
$(x_i , y_j, z_k) = (i h , j h, k h)$, $h = L/N^*$, ($N^* = 2N+1$), $0 \le i , j, k \le 2 N$, and periodic boundary conditions in both $x$, $y$ and $z$ directions. The 3-D extension of the fourth order long stencil finite difference operator as $\Delta_{h,(4)} = {\cal D}_{x,(4)}^2 + D_{y, (4)}^2 + D_{z, (4)}^2$ can be defined in the same way as \qref{FD-4th-2d-1}-\qref{FD-4th-2d-2}. For any 3-D discrete grid function $g$, at grid points $(x_i, y_j, z_k)$, $0 \le i , j, k \le N^* -1$, the discrete $\ell^2$ norm is given by 
\begin{equation} 
  \nrm{g}_2 = \Bigl( h^3 \sum_{i,j=0}^{N^* -1} g_{i,j,k}^2  \Bigr)^{\frac12} .  \label{def-3d-1}
\end{equation}  

The discrete $\ell^2$ truncation error estimate can be performed in a similar fashion as in the 2-D case.  Both the continuous Fourier expansion for the test function and the discrete expansions of its higher order derivatives interpolated at the numerical grid points have to be analyzed. Lemma \ref{lem-1} is still valid, which provides a detailed eigenvalue comparison and analysis between $\Delta_{h, (4)}$ and $\Delta$. Furthermore, Lemma \ref{lem-2} can be established in a similar way, and the convergence property of the following triple series plays a key role: 
\begin{eqnarray} 
  &&  
  \sum_{k_1,l_1,m1=-\infty\atop (k_1,l_1,m1) \ne (0,0,0)}^{\infty}  
  \frac{ 1 }{\left( ( | k_1 | - \frac12)^2 + ( | l_1 | - \frac12)^2 
  + ( | m_1 | - \frac12)^2\right)^{\beta_0} } , \label{convergent-double-3}
\end{eqnarray} 
with $\beta_0 =2$ is convergent, where $\beta_0 = \frac{K_0}{2} = 2$, with the accuracy order $K_0=4$. 

As a result, the following theorem could be established. Its detailed proof is skipped for brevity. 
 
\begin{prop} \label{thm-3d}
  For $f \in H^6_{per} (\Omega)$, with $\Omega = (0,L)^3$, we have 
\begin{equation} 
   \nrm{ \tau }_2  \le C h^4 \nrm{f}_{H^6} ,  \quad 
  \mbox{with} \, \, \, \tau_{i,j,k} = \Delta_{h,(4)} f_{i,j,k} 
 - (\Delta f) (x_i, y_j, z_k) , \label{est-3d-1}
\end{equation} 
in which $C$ only depends on $L$. 
\end{prop}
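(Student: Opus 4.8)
The plan is to reproduce, \emph{mutatis mutandis}, the Fourier-analytic argument carried out for the two-dimensional case in the proof of Proposition~\ref{thm-2d}, replacing every double index $(k,l)$ and every double aliasing sum over $(k_1,l_1)$ by its triple counterpart $(k,l,m)$, $(k_1,l_1,m_1)$. First I would expand $f$ in its continuous Fourier series on $(0,L)^3$, record the Parseval identities for $f$, $\Delta f$ and $\Delta^3 f$ (the three-dimensional analogs of \eqref{est-2d-f-3}--\eqref{est-2d-f-4}), and write the grid restriction of $f$ through the aliased coefficients $\tilde{\hat f}_{k,l,m} = \sum_{k_1,l_1,m_1} \hat f_{k+k_1N^*,\, l+l_1N^*,\, m+m_1N^*}$, exactly as in \eqref{expansion-2d-f-dis-1}. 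Since each discrete exponential is an eigenfunction of $\Delta_{h,(4)} = {\cal D}_{x,(4)}^2 + {\cal D}_{y,(4)}^2 + {\cal D}_{z,(4)}^2$ with eigenvalue $\lambda_{kx,(4)} + \lambda_{ly,(4)} + \lambda_{mz,(4)}$, and since the interpolant of $\Delta f$ carries the coefficients $\tilde{\hat f}^{(2)}_{k,l,m}$ built from the exact symbol $-\frac{4\pi^2}{L^2}((k+k_1N^*)^2 + (l+l_1N^*)^2 + (m+m_1N^*)^2)$, the discrete Parseval identity (the three-dimensional version of \eqref{est-2d-g-2}, now with prefactor $L^3$) reduces the claim to the bound $\sum_{k,l,m=-N}^{N}|(\lambda_{kx,(4)}+\lambda_{ly,(4)}+\lambda_{mz,(4)})\tilde{\hat f}_{k,l,m} - \tilde{\hat f}^{(2)}_{k,l,m}|^2 \le C h^8 \|f\|_{H^6}^2$.

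Next I would split each summand as in \eqref{est-2d-coefficient-1}. The principal contribution, from $(k_1,l_1,m_1)=(0,0,0)$, equals $((\lambda_{kx,(4)}+\frac{4k^2\pi^2}{L^2}) + (\lambda_{ly,(4)}+\frac{4l^2\pi^2}{L^2}) + (\lambda_{mz,(4)}+\frac{4m^2\pi^2}{L^2}))\hat f_{k,l,m}$, while the remaining terms collect the aliasing errors in the three coordinate directions. Because Lemma~\ref{lem-1} is a one-dimensional eigenvalue comparison, it applies verbatim in each direction, so the three principal pieces are each bounded by $C_1 h^4$ times the appropriate sixth-power symbol; summing these against the $H^6$ norm (the three-dimensional analog of \eqref{est-2d-f-dis-3}, with $\|\Delta^3 f\|$ in place of its two-dimensional version) produces the required $h^8\|f\|_{H^6}^2$ scaling. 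Since there are now three principal components and three aliasing components, the Cauchy inequality that splits the square in \eqref{est-2d-f-dis-4} is replaced by $|a_1+\cdots+a_6|^2 \le 6\sum_{i=1}^{6}|a_i|^2$.

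The one genuinely new ingredient, and the step I expect to be the main obstacle, is the three-dimensional analog of Lemma~\ref{lem-2}, controlling the aliasing sums. Here I would bound the aliasing coefficient $\lambda_{kx,(4)}+\frac{4(k+k_1N^*)^2\pi^2}{L^2}$ by a fixed multiple of the true high-frequency symbol, then apply the Cauchy--Schwarz inequality to the inner sum, splitting $\hat f_{k+k_1N^*,\,l+l_1N^*,\,m+m_1N^*}$ against the weight $((k+k_1N^*)^2+(l+l_1N^*)^2+(m+m_1N^*)^2)^{6}$ that generates $\|\Delta^3 f\|^2$. Using $|k+k_1N^*| \ge (|k_1|-\tfrac12)N^*$ for $k_1\neq 0$, together with the analogous bounds in the other two directions, every aliased frequency is bounded below by $N^*$ times the bracket appearing in \eqref{convergent-double-3}; since $N^* = L/h$, this is exactly what converts a factor $(N^*)^{-8}$ into $h^8/L^8$ and leaves behind a triple series of the form \eqref{convergent-double-3}. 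The crux is therefore the summability of that triple series over $\mathbb{Z}^3\setminus\{(0,0,0)\}$: in contrast to the double series of the two-dimensional case, its tail behaves like a sum of $R^{2}R^{-2\beta_0}$ over spherical shells of radius $R$, so convergence demands that the exponent exceed the critical value $3/2$ for the three-dimensional integer lattice. This condition is comfortably satisfied and is precisely the point flagged in the discussion preceding the statement.

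Finally, I would assemble the three estimates as in \eqref{est-2d-f-dis-4}, reaching $\sum_{k,l,m=-N}^{N}|\cdots|^2 \le \tilde C_2 h^8\|f\|_{H^6}^2$ with $\tilde C_2$ depending only on $L$, and then substitute into the discrete Parseval identity to conclude $\|\tau\|_2 \le C h^4\|f\|_{H^6}$ with $C=\sqrt{\tilde C_2}\,L^{3/2}$. Beyond this bookkeeping, the only steps demanding genuine care are the uniform coefficient bound and the shell-counting argument certifying convergence of \eqref{convergent-double-3}; the rest is a faithful transcription of the two-dimensional proof.
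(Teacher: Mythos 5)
Your proposal is correct and follows essentially the same route as the paper, which explicitly sketches the 3-D proof as a transcription of the 2-D Fourier/aliasing argument: you reuse the one-dimensional eigenvalue comparison of Lemma~\ref{lem-1} in each coordinate direction, adapt Lemma~\ref{lem-2} to triple aliasing sums, and reduce everything to convergence of the triple series \eqref{convergent-double-3} with $\beta_0 = 2$, which your shell-counting criterion $\beta_0 > \tfrac32$ correctly certifies. The paper omits these details for brevity; your write-up supplies exactly them, including the correct $L^3$ Parseval prefactor, the six-term Cauchy splitting, and the resulting constant depending only on $L$.
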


\section{The numerical scheme for the Cahn-Hilliard equation} \label{sec:numerical scheme} 

For simplicity, we focus our attention on a 2-D domain. The extension to the 3-D case is expected to be straightforward. 

\subsection{The spatial discretization and the related notations}

As defined in the previous section, it is assumed that assume $\Omega = (0,L)^2$. We write $L = m\cdot h$, where $m$ is a positive integer. The parameter $h = \frac{L}{m}$ is called the mesh or grid spacing. We define the following uniform, infinite grid with grid spacing $h>0$: $E := \{ x_{i} \ |\ i\in {\mathbb{Z}}\}$, with $x_i := (i-\frac12) h$. Consider the following 2-D discrete periodic function spaces: 
	\begin{eqnarray*}
{\mathcal V}_{\rm per} &:=& \{\nu: E\times E\rightarrow {\mathbb{R}}\ | \ \nu_{i,j}= \nu_{i+\alpha m,j+\beta m}, \ \forall \, i,j,\alpha,\beta\in \mathbb{Z}  \} . 
	\end{eqnarray*}		
We also define the mean zero space 
	\[
\mathring{\mathcal V}_{\rm per}:=\left\{\nu\in {\mathcal V}_{\rm per} \ \middle| \overline{\nu} :=  \frac{h^2}{| \Omega|} \sum_{i,j=1}^m \nu_{i,j}  = 0\right\} .
	\]
The 4th order 2-D discrete Laplacian, $\Delta_{h,(4)} : {\mathcal V}_{\rm per}\rightarrow{\mathcal V}_{\rm per}$, is given by 
	\[
\Delta_{h,(4)} := {\cal D}_{x,(4)}^2 + D_{y, (4)}^2.
	\]
Now we define the following discrete inner products:  
	\begin{eqnarray*}
( \nu , \xi )_2 &:= h^2\sum_{i,j=1}^m \nu_{i,j}\psi_{i,j},\quad \nu,\, \xi\in {\mathcal V}_{\rm per} . 
	\end{eqnarray*}	
Suppose that $\zeta\in\mathring{\cal V}_{\rm per}$, then there is a unique solution $\msfT_h[\zeta]\in\mathring{\cal V}_{\rm per}$ such that $-\Delta_{h, (4)} \msfT_h[\zeta] = \zeta$. We often write, in this case, $\msfT_h[\zeta] = -\Delta^{-1}_{h,(4)} \zeta$. The discrete analog of the $\mathring{H}^{-1}_{\rm per}$ inner product is defined as 
	\[
(\zeta,\xi)_{-1} := \iprd{\zeta}{\msfT_h[\xi]}_2 = \iprd{\msfT_h[\zeta]}{\xi}_2,\quad \zeta,\, \xi\in\mathring{\cal V}_{\rm per}.
	\]
With the above machinery, if $\nu\in\mathring{\cal V}_{\rm per}$, then $\| \nu \|_{-1,h}^2 = (\nu , \nu )_{-1}$. If $\nu\in {\cal V}_{\rm per}$, then $\nrm{\nu}_2^2 := (\nu , \nu)_2$; $\nrm{\nu}_p^p := ( |\nu|^p , 1)_2$ ($1\le p< \infty$), and $\nrm{\nu}_\infty := \max_{1\le i\le m \atop 1\le j\le n}\left|\nu_{i,j}\right|$.

For $\phi, \psi \in \mathring{\cal V}_{\rm per}$, the following summation by parts formulas are available: 
	\begin{eqnarray} 
	  &&
- ( \phi , \Delta_{h, (4)} \psi )_2  = - ( \Delta_{h, (4)} \phi , \psi )_2 = ( \nabla_h \phi , \nabla_h \psi)_2 + \frac{h^2}{12} ( \Delta_h \phi , \Delta_h \psi )_2  , \label{summation-1} 
\\
  && 
  ( \msfT_h \phi , ( - \Delta_{h, (4)}) \psi )_2 = ( \phi , \psi )_2 .
	\label{summation-2}
	\end{eqnarray}
In turn, we denote the following norm for the discrete gradient, corresponding to the long stencil difference: 
\begin{eqnarray} 
  \| \nabla_{h,(4)} f \|_2^2 = \| \nabla_h f \|_2^2 + \frac{h^2}{12} \| \Delta_h f \|_2^2 . 
  \label{energy stability-2} 
\end{eqnarray}
In addition, the discrete $\nrm{ \, \cdot \, }_{H_h^1}$ and $\nrm{ \, \cdot \, }_{H_h^2}$ norms are given by  
	\begin{eqnarray}
\nrm{ f }_{H_h^1}^2 := \nrm{ f }_2^2 + \nrm{ \nabla_h f }_2^2 , \quad 
\nrm{ f }_{H_h^2}^2 := \nrm{ f }_{H_h^1}^2  + \nrm{ \Delta_h \phi }_2^2 .
	\label{discrete H1 H2 norm} 
	\end{eqnarray}
	
For any periodic grid function $\phi$, the discrete energy is introduced as  
\begin{eqnarray} 
    E_h (\phi) = \frac14 \| \phi \|_4^4 - \frac12 \| \phi \|_2^2 + \frac14 | \Omega | + \frac{\varepsilon^2}{2}  \| \nabla_{h,(4)} \phi \|_2^2 ,  \label{discrete energy-0} 
\end{eqnarray} 	

The following result, a discrete Sobolev embedding from $H_h^1$ to $\ell^6$, could be derived in the same manner as Lemma 2.1 in~\cite{cheng16a}, and Lemma 5.1 in~\cite{feng2017preconditioned}. The details are left to interested readers; also see the related analyses in~\cite{fengW17c}. 

\begin{lem} 
  For any periodic grid function $f$, we have 
\begin{eqnarray} 
  \| f \|_6 \le C ( \| f \|_2 + \| \nabla_h f \|_2 ) ,  \label{embedding-0} 
\end{eqnarray} 
for some constant $C$ only dependent on $\Omega$. 
\end{lem}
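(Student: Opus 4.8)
The plan is to move the estimate to the continuous level through the Fourier interpolant and then invoke the classical Sobolev embedding $H^1(\Omega) \hookrightarrow L^6(\Omega)$, which holds in two space dimensions. Write $f_I := {\cal I}_N f$ for the band-limited trigonometric interpolant of the grid function $f$, a real trigonometric polynomial of degree at most $N$ in each variable. Its continuous $L^2$ norm coincides with the discrete one, $\| f_I \|_{L^2} = \| f \|_2$, since the midpoint quadrature on the $(2N+1)^2$ grid integrates trigonometric polynomials of degree up to $2N$ exactly (equivalently, by the discrete and continuous Parseval identities applied to a degree-$(\le N)$ expansion). The continuous embedding then supplies $\| f_I \|_{L^6} \le C ( \| f_I \|_{L^2} + \| \nabla f_I \|_{L^2} )$, so it remains to carry out two transfer estimates: (i) bound the discrete $\ell^6$ norm of $f$ by the continuous $L^6$ norm of $f_I$, and (ii) bound $\| \nabla f_I \|_{L^2}$ by the discrete gradient norm $\| \nabla_h f \|_2$.

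For step (i), I would exploit the identity $\| f \|_6^3 = \| f^3 \|_2$, where $f^3$ denotes the grid function with entries $f_{i,j}^3$. Since $f$ and $f_I$ agree at the grid points, $f^3$ is the sampling of the degree-$(\le 3N)$ polynomial $f_I^3$, and therefore $\| f^3 \|_2 = \| {\cal I}_N ( f_I^3 ) \|_{L^2}$. The collocation coefficients of ${\cal I}_N ( f_I^3 )$ are the folded sums $\sum_{k_1, l_1} \widehat{(f_I^3)}_{k + k_1 N^*, l + l_1 N^*}$; because $f_I^3$ has degree at most $3N$ and $N^* = 2N+1$, only the nine indices $(k_1, l_1) \in \{ -1, 0, 1 \}^2$ can contribute, and the map $(k, l, k_1, l_1) \mapsto (k + k_1 N^*, l + l_1 N^*)$ is injective for $|k|, |l| \le N$, so each mode of $f_I^3$ is counted at most once. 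A Cauchy--Schwarz estimate on these at-most-nine-term sums, followed by Parseval, then yields $\| {\cal I}_N ( f_I^3 ) \|_{L^2}^2 \le 9 \, \| f_I^3 \|_{L^2}^2$, i.e. $\| f \|_6 \le 3^{1/3} \| f_I \|_{L^6}$, with a constant independent of $h$. This aliasing estimate is exactly the ``aliasing error control'' alluded to in the introduction, and I expect it to be the main obstacle, as it is the one place where the discrete $\ell^6$ and continuous $L^6$ norms genuinely fail to coincide.

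For step (ii), I would compare Fourier symbols. Writing $f = \sum_{|k|,|l| \le N} \hat f_{k,l} \, {\rm e}^{2\pi {\rm i}(k x + l y)/L}$ in discrete Fourier form, continuous Parseval gives $\| \partial_x f_I \|_{L^2}^2 = L^2 \sum ( 2\pi k / L )^2 | \hat f_{k,l} |^2$, while the summation-by-parts identity for $\Delta_h$ gives $\| \nabla_h f \|_2^2 = L^2 \sum ( | \lambda_k | + | \lambda_l | ) | \hat f_{k,l} |^2$ with $\lambda_k = -4 \sin^2(k \pi h / L)/h^2$. For $|k| \le N$ one has $|k \pi h / L| < \pi/2$, so Jordan's inequality $| \sin \theta | \ge \tfrac{2}{\pi} | \theta |$ gives $| \lambda_k | \ge 16 k^2 / L^2$, whence $( 2\pi k / L )^2 \le \tfrac{\pi^2}{4} | \lambda_k |$, and likewise in $y$; summing produces $\| \nabla f_I \|_{L^2} \le \tfrac{\pi}{2} \| \nabla_h f \|_2$. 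Combining step (i), the continuous embedding, the identity $\| f_I \|_{L^2} = \| f \|_2$, and step (ii) yields $\| f \|_6 \le C ( \| f \|_2 + \| \nabla_h f \|_2 )$ with $C$ depending only on $\Omega$, as claimed. The three-dimensional analogue proceeds identically, now using $H^1(\Omega) \hookrightarrow L^6(\Omega)$, which is precisely the critical Sobolev exponent in three dimensions.
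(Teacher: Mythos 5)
Your proof is correct, and it follows essentially the route the paper itself endorses: the paper omits the argument, deferring to Lemma 2.1 of~\cite{cheng16a} and Lemma 5.1 of~\cite{feng2017preconditioned}, which proceed exactly as you do --- trigonometric interpolation of the grid function, the continuous embedding $H^1 \hookrightarrow L^6$, an aliasing estimate for the cubed interpolant (degree $3N$ folded onto $\{-1,0,1\}^2$ shifts), and a symbol comparison between $\nabla$ and $\nabla_h$. This is precisely the ``discrete Fourier analysis, combined with certain aliasing error estimate'' that the introduction announces, so no gap remains.
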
 

The inequalities in the next lemma will play an important role in the energy stability and optimal rate convergence analysis. A direct calculation is not able to derive these inequalities; instead, a discrete Fourier analysis has to be applied in the derivation; the details will be left in Appendix~\ref{proof:Lemma 1}. 

\begin{lem}  \label{lem: inequality} 
  We have 
\begin{eqnarray} 
  &&
  \| f \|_2^2 \le \| f \|_{-1,h} \cdot \| \nabla_{h, (4)} f \|_2 ,   \quad \forall f \in  \mathring{\cal V}_{\rm per} ,  \label{inequality-0-1} 
\\
   &&
   \| \Delta_h f \|_2 \le \| \Delta_{h, (4)} f \|_2 , \quad \forall f \in {\cal V}_{\rm per} .  
   \label{inequality-0-2}    
\end{eqnarray} 
\end{lem}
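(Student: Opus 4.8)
The plan is to diagonalize every operator appearing in the two inequalities by the discrete Fourier basis, after which both estimates collapse to elementary scalar inequalities on the eigenvalues. First I would expand a periodic grid function as $f_{i,j} = \sum_{k,l} \hat f_{k,l} \exp\!\left(2\pi {\rm i}(k x_i + l y_j)/L\right)$ and recall that the complex exponentials simultaneously diagonalize $D_x^2$, $D_y^2$, ${\cal D}_{x,(4)}^2$ and $D_{y,(4)}^2$, hence also $\Delta_h$ and $\Delta_{h,(4)}$, with the eigenvalues $\lambda_k$, $\lambda_l$, $\lambda_{kx,(4)} = \lambda_k - \frac{h^2}{12}\lambda_k^2$ and $\lambda_{ly,(4)} = \lambda_l - \frac{h^2}{12}\lambda_l^2$ already recorded in Section~\ref{sec:difference operator}. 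Writing $s_k = \sin^2(k\pi h/L)\in[0,1]$ so that $\lambda_k = -4 s_k/h^2 \le 0$, the combination $\mu_{k,l} := -(\lambda_{kx,(4)}+\lambda_{ly,(4)})$ is strictly positive for $(k,l)\neq(0,0)$, which is exactly what makes $\msfT_h = (-\Delta_{h,(4)})^{-1}$ well defined on mean-zero functions. Using the discrete Parseval identity \qref{est-2d-g-2} together with the summation-by-parts identity \qref{summation-1}, I would record the Fourier-side expressions $\nrm{f}_2^2 = L^2\sum |\hat f_{k,l}|^2$, $\nrm{\nabla_{h,(4)} f}_2^2 = -(f,\Delta_{h,(4)}f)_2 = L^2\sum \mu_{k,l}|\hat f_{k,l}|^2$, $\nrm{f}_{-1,h}^2 = L^2\sum \mu_{k,l}^{-1}|\hat f_{k,l}|^2$ (the weighted sums over $(k,l)\neq(0,0)$, where $\hat f_{0,0}=0$ for $f\in\mathring{\cal V}_{\rm per}$), $\nrm{\Delta_h f}_2^2 = L^2\sum(\lambda_k+\lambda_l)^2|\hat f_{k,l}|^2$ and $\nrm{\Delta_{h,(4)}f}_2^2 = L^2\sum(\lambda_{kx,(4)}+\lambda_{ly,(4)})^2|\hat f_{k,l}|^2$.

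With these representations, inequality \qref{inequality-0-1} becomes a Cauchy--Schwarz interpolation inequality in the Fourier coefficients. Writing $|\hat f_{k,l}|^2 = \left(\mu_{k,l}^{-1/2}|\hat f_{k,l}|\right)\left(\mu_{k,l}^{1/2}|\hat f_{k,l}|\right)$ and applying the discrete Cauchy--Schwarz inequality over the index set $(k,l)\neq(0,0)$ gives $\sum|\hat f_{k,l}|^2 \le \left(\sum \mu_{k,l}^{-1}|\hat f_{k,l}|^2\right)^{1/2}\left(\sum \mu_{k,l}|\hat f_{k,l}|^2\right)^{1/2}$; multiplying through by $L^2 = L\cdot L$ turns this into exactly $\nrm{f}_2^2 \le \nrm{f}_{-1,h}\nrm{\nabla_{h,(4)}f}_2$. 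This is the discrete analogue of $\nrm{f}_{L^2}^2 \le \nrm{f}_{H^{-1}}\nrm{f}_{H^1}$, and the only points needing verification are the positivity of $\mu_{k,l}$ on the nonzero modes and the fact that the zero mode drops out precisely because $f$ is mean-zero.

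For \qref{inequality-0-2} I would argue mode by mode. Since $\lambda_{kx,(4)} = \lambda_k - \frac{h^2}{12}\lambda_k^2$ and the subtracted term is nonnegative, one has $-\lambda_{kx,(4)} = -\lambda_k + \frac{h^2}{12}\lambda_k^2 \ge -\lambda_k \ge 0$, and likewise $-\lambda_{ly,(4)} \ge -\lambda_l \ge 0$. Adding these gives $-(\lambda_{kx,(4)}+\lambda_{ly,(4)}) \ge -(\lambda_k+\lambda_l) \ge 0$, and since both sides are nonnegative the squares are ordered the same way: $(\lambda_k+\lambda_l)^2 \le (\lambda_{kx,(4)}+\lambda_{ly,(4)})^2$ for every $(k,l)$. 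Weighting by $L^2|\hat f_{k,l}|^2$ and summing over all modes yields $\nrm{\Delta_h f}_2^2 \le \nrm{\Delta_{h,(4)}f}_2^2$, which is \qref{inequality-0-2}; here no mean-zero restriction is needed, since the $(0,0)$ mode contributes zero to both sides.

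I expect that, once the diagonalization is in place, neither step is a genuine obstacle: the first is Cauchy--Schwarz and the second is a one-line eigenvalue comparison. The real content is the observation emphasized in the statement, namely that these bounds resist any direct manipulation of the stencils in physical space but become transparent after passing to Fourier coefficients. The only places requiring care are the bookkeeping of the eigenvalues and the inclusion/exclusion of the $(0,0)$ mode (essential for the $\nrm{\, \cdot\,}_{-1,h}$ norm in the first inequality, harmless in the second), and confirming that the same diagonalization remains valid on the cell-centered grid $x_i = (i-\frac12)h$ used in the scheme, where the half-integer shift changes only the phases of the eigenfunctions and not the eigenvalues.
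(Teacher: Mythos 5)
Your proposal is correct and follows essentially the same route as the paper's proof in Appendix~\ref{proof:Lemma 3}: discrete Fourier diagonalization of $\Delta_h$ and $\Delta_{h,(4)}$, Parseval identities for $\nrm{f}_2$, $\nrm{f}_{-1,h}$, $\nrm{\nabla_{h,(4)}f}_2$, $\nrm{\Delta_h f}_2$, $\nrm{\Delta_{h,(4)}f}_2$, Cauchy--Schwarz on the Fourier coefficients for \qref{inequality-0-1}, and a mode-by-mode eigenvalue comparison (the fourth-order eigenvalues dominate the second-order ones in magnitude) for \qref{inequality-0-2}. Your handling of the zero mode and the cell-centered grid shift matches the paper's treatment, so there is nothing to add.
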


\subsection{The fully discrete scheme and the main theoretical results} 

A modified second order BDF temporal discretization is applied to the Cahn-Hilliard equation, combined with long stencil fourth order difference approximation in space:  
 \begin{eqnarray} 
   \hspace{-0.3in} 
 \frac{\frac32 \phi^{k+1}- 2 \phi^{k}+ \frac12 \phi^{k-1}}{\dt} &=& \Delta_{h,(4)} \mu_h^{k+1} , 
 \label{scheme-BDF-CH-1}\\
   \hspace{-0.3in} 
 \mu_h^{k+1} &=& (\phi^{k+1})^3 - 2 \phi^k + \phi^{k-1} - \varepsilon^2 \Delta_{h,(4)}\phi^{k+1}  - A \dt \Delta_{h,(4)} ( \phi^{k+1} - \phi^k ) .  
 \label{scheme-BDF-CH-2}
 \end{eqnarray}
In comparison with the standard BDF algorithm, the concave diffusion term is updated explicitly, for the sake of unique solvability. In addition, a second order Douglas-Dupont-type regularization term is added in the chemical potential. Similar ideas could be found in~\cite{yan17}, with the finite element approximation in space, and~\cite{fengW17c}, in which the epitaxial thin film growth model is analyzed. 

We denote $\Phi$ as the exact solution for~\eqref{CH equation}, and the initial value is taken as $\phi^0_{i,j} = \Phi (x_i, y_j, t=0)$. In addition, it is noticed that~\eqref{scheme-BDF-CH-1}-\eqref{scheme-BDF-CH-2} is a two-step numerical method, so that a ``ghost" point extrapolation for $\phi^{-1}$ is needed. To preserve the second order accuracy in time, we apply the following approximation: 
\begin{eqnarray} 
  \phi^{-1} = \phi^0 - \dt \Delta_{h,(4)} \mu_h^0, \quad \mbox{with} \, \, \mu_h^0 := (\phi^0)^3 - \phi^0 - \varepsilon^2 \Delta_{h,(4)}\phi^0 .  \label{scheme-BDF-CH-initial-1}
\end{eqnarray}
A careful Taylor expansion indicates an $O (\dt^2 + h^4)$ accuracy for such an approximation: 
\begin{eqnarray} 
  \| \phi^{-1} - \Phi^{-1} \|_2 \le C (\dt^2 + h^4). \label{scheme-BDF-CH-initial-2}
\end{eqnarray}  

\begin{thm} \label{CH solvability} 
  Given $\phi^k, \phi^{k-1} \in {\mathcal V}_{\rm per}$, with $\overline{\phi^k} = \overline{\phi^{k-1}}$, there exists a unique solution $\phi^{k+1} \in {\mathcal V}_{\rm per}$ for the numerical scheme~\eqref{scheme-BDF-CH-1}-\eqref{scheme-BDF-CH-2}. And also, this scheme is mass conservative, i.e., $\overline{\phi^k} \equiv \overline{\phi^0} := \beta_0$, for any $k \ge 0$. 
\end{thm}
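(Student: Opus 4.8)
The plan is to establish mass conservation first, since it is nearly immediate and is needed to set up the solvability argument on the correct affine subspace. Summing the scheme~\eqref{scheme-BDF-CH-1} over all grid points and using that $\Delta_{h,(4)}$ maps into a discrete divergence form (so that $\sum_{i,j} (\Delta_{h,(4)} \mu_h^{k+1})_{i,j} = 0$ for any periodic grid function, which follows from the telescoping structure of the long-stencil second-difference operator), I would conclude $\frac32 \overline{\phi^{k+1}} - 2 \overline{\phi^k} + \frac12 \overline{\phi^{k-1}} = 0$. Given the hypothesis $\overline{\phi^k} = \overline{\phi^{k-1}}$, this forces $\overline{\phi^{k+1}} = \overline{\phi^k}$, and induction yields $\overline{\phi^k} \equiv \overline{\phi^0} =: \beta_0$ for all $k$.

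For unique solvability, the idea is to recast the scheme as the Euler--Lagrange equation of a strictly convex functional, so that existence and uniqueness follow from the direct method in the calculus of variations. First I would eliminate $\mu_h^{k+1}$ by substituting~\eqref{scheme-BDF-CH-2} into~\eqref{scheme-BDF-CH-1}, obtaining a single equation for $\phi^{k+1}$ in the affine space of grid functions with mean $\beta_0$. Applying the operator $\msfT_h = -\Delta_{h,(4)}^{-1}$ (well-defined on the mean-zero space, by the discussion preceding~\eqref{summation-2}) converts this into
\begin{equation}
  \frac{3}{2\dt} \msfT_h\!\left[ \phi^{k+1} - \phi^k \right] + (\phi^{k+1})^3 - \varepsilon^2 \Delta_{h,(4)} \phi^{k+1} - A \dt \Delta_{h,(4)} \phi^{k+1} = g^k ,
\end{equation}
where $g^k$ collects all the explicit terms depending only on $\phi^k, \phi^{k-1}$. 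I would then exhibit this as $\nabla_\phi J(\phi^{k+1}) = 0$ for the functional
\begin{equation}
  J(\phi) = \frac{3}{4\dt} \nrm{ \phi - \phi^k }_{-1,h}^2 + \frac14 \nrm{\phi}_4^4 + \frac{\varepsilon^2}{2} \nrm{\nabla_{h,(4)} \phi}_2^2 + \frac{A\dt}{2} \nrm{\nabla_{h,(4)} \phi}_2^2 - (g^k, \phi)_2 ,
\end{equation}
defined over the finite-dimensional affine subspace $\{ \phi \in {\cal V}_{\rm per} : \overline{\phi} = \beta_0 \}$. Here I would use the summation-by-parts identity~\eqref{summation-1} to recognize $-(\phi, \Delta_{h,(4)}\phi)_2 = \nrm{\nabla_{h,(4)}\phi}_2^2$, so that each diffusive and regularization contribution is manifestly nonnegative.

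The remaining work is to verify that $J$ is strictly convex and coercive, which hands over existence and uniqueness of the minimizer, whose critical point equation is exactly the scheme. Strict convexity is the crucial point: the $\nrm{\cdot}_{-1,h}^2$ term is a positive semidefinite quadratic form (strictly positive on mean-zero increments, which is all that appears since $\overline{\phi - \phi^k} = 0$ by the mass conservation just proved), the quartic term $\frac14\nrm{\phi}_4^4$ is strictly convex, and the gradient terms are positive semidefinite quadratics; their sum is strictly convex. Coercivity follows because $J(\phi) \to \infty$ as $\nrm{\phi}_2 \to \infty$ on the affine subspace, dominated by the quartic growth. Since the domain is a finite-dimensional affine space and $J$ is continuous, coercive, and strictly convex, a unique global minimizer exists, and stationarity gives the unique solution $\phi^{k+1}$. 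I expect the main obstacle to be purely bookkeeping rather than conceptual: one must be careful that the $\msfT_h$-dependent term only makes sense on the mean-zero component, so the reduction to the affine subspace (justified by the mass conservation argument) must be carried out before invoking convexity, and the explicit concave terms $-2\phi^k + \phi^{k-1}$ must be correctly folded into $g^k$ so that no spurious nonconvex dependence on $\phi^{k+1}$ survives.
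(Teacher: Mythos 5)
Your proposal is correct and follows essentially the same route as the paper: mass conservation via discrete summation of the scheme (using $\overline{\Delta_{h,(4)}\mu_h^{k+1}}=0$ and the BDF structure), and unique solvability by inverting $\Delta_{h,(4)}$ and recasting the scheme as the Euler--Lagrange equation of a strictly convex, coercive functional over the mean-constrained affine subspace, which is exactly the paper's functional $F_h$ in \eqref{eqn:min-energy-4th-2nd} up to a factor of $\dt$ and a reshuffling of $\phi$-independent terms. The only differences are cosmetic: you prove conservation first and spell out coercivity and the direct method, whereas the paper simply invokes strong convexity of $F_h$ over the hyperplane $\overline{\phi}=\beta_0$.
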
  

	\begin{thm} \label{CH-energy stability}
For $k \ge 1$, we introduce
	\begin{equation}
\mathcal{E}_h (\phi^{k+1}, \phi^k) := E_h ( \phi^{k+1})+\frac{1}{4\dt} \| \phi^{k+1}- \phi^k\|_{-1,h}^2 + \frac{1}{2} \| \phi^{k+1} - \phi^k \|_2^2 . 
	\label{discrete energy}
	\end{equation}
For $A \ge \frac{1}{16}$, a modified energy-decay property is available for the numerical scheme~\eqref{scheme-BDF-CH-1}-\eqref{scheme-BDF-CH-2}: 
	\begin{equation}
\mathcal{E}_h ( \phi^{k+1}, \phi^k) 
\le \mathcal{E}_h ( \phi^k, \phi^{k-1}) .
	\label{CH-eng stab-est}
	\end{equation}
	\end{thm}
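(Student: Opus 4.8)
The plan is to test the scheme against the one-step increment $\delta^{k+1} := \phi^{k+1}-\phi^k$ (and I write $\delta^k := \phi^k - \phi^{k-1}$), exploiting the algebraic identity $\frac32\phi^{k+1}-2\phi^k+\frac12\phi^{k-1} = \frac32\delta^{k+1}-\frac12\delta^k$ together with the $H^{-1}$ gradient-flow structure. Since Theorem~\ref{CH solvability} guarantees that $\overline{\phi^k}$ is independent of $k$, both increments $\delta^{k+1},\delta^k$ lie in $\mathring{\cal V}_{\rm per}$, so the $\|\cdot\|_{-1,h}$ norm and the interpolation inequality \qref{inequality-0-1} of Lemma~\ref{lem: inequality} apply to them. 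The goal is to bound $\iprd{\delta^{k+1}}{\mu_h^{k+1}}_2$ from two sides: exactly, via the temporal operator, and from below, via the energy.

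For the exact side, I would apply $-\msfT_h$ to \qref{scheme-BDF-CH-1} (whose right-hand side is mean-zero) and pair with the mean-zero function $\delta^{k+1}$ in $(\cdot,\cdot)_2$; using the definition of $(\cdot,\cdot)_{-1}$ this yields
\[
\iprd{\delta^{k+1}}{\mu_h^{k+1}}_2 = -\frac{1}{\dt}\Big(\delta^{k+1}\, ,\, \tfrac32\delta^{k+1}-\tfrac12\delta^k\Big)_{-1} = -\frac{3}{2\dt}\|\delta^{k+1}\|_{-1,h}^2 + \frac{1}{2\dt}(\delta^{k+1},\delta^k)_{-1}.
\]
For the lower bound, I would expand the same quantity through \qref{scheme-BDF-CH-2} and estimate term by term: convexity of $\frac14 x^4$ gives $(\delta^{k+1},(\phi^{k+1})^3)_2 \ge \frac14\|\phi^{k+1}\|_4^4 - \frac14\|\phi^k\|_4^4$; the summation-by-parts formula \qref{summation-1} with the identity $a(a-b)=\frac12(a^2-b^2+(a-b)^2)$ turns the gradient term into $\frac{\varepsilon^2}{2}(\|\nabla_{h,(4)}\phi^{k+1}\|_2^2-\|\nabla_{h,(4)}\phi^k\|_2^2)+\frac{\varepsilon^2}{2}\|\nabla_{h,(4)}\delta^{k+1}\|_2^2$; the explicit concave/extrapolation term $-(2\phi^k-\phi^{k-1})=-(\phi^k+\delta^k)$ produces $-\frac12(\|\phi^{k+1}\|_2^2-\|\phi^k\|_2^2)+\frac12\|\delta^{k+1}\|_2^2-(\delta^{k+1},\delta^k)_2$; and the Douglas-Dupont term contributes the nonnegative $A\dt\|\nabla_{h,(4)}\delta^{k+1}\|_2^2$. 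Collecting these reproduces $E_h(\phi^{k+1})-E_h(\phi^k)$ exactly, giving
\[
E_h(\phi^{k+1})-E_h(\phi^k)+\Big(\tfrac{\varepsilon^2}{2}+A\dt\Big)\|\nabla_{h,(4)}\delta^{k+1}\|_2^2+\tfrac12\|\delta^{k+1}\|_2^2-(\delta^{k+1},\delta^k)_2 \le \iprd{\delta^{k+1}}{\mu_h^{k+1}}_2.
\]

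Third, I would substitute the exact $H^{-1}$ identity into this inequality and insert everything into the target difference $\mathcal{E}_h(\phi^{k+1},\phi^k)-\mathcal{E}_h(\phi^k,\phi^{k-1})$ defined in \qref{discrete energy}. A careful bookkeeping shows that the $\frac12\|\delta^{k+1}\|_2^2$ and $\frac14\|\delta^k\|_{-1,h}^2$ contributions cancel; the surviving $H^{-1}$ terms complete a square, $-\frac{5}{4\dt}\|\delta^{k+1}\|_{-1,h}^2+\frac{1}{2\dt}(\delta^{k+1},\delta^k)_{-1}-\frac{1}{4\dt}\|\delta^k\|_{-1,h}^2 = -\frac{1}{4\dt}\big(4\|\delta^{k+1}\|_{-1,h}^2+\|\delta^{k+1}-\delta^k\|_{-1,h}^2\big)\le -\frac{1}{\dt}\|\delta^{k+1}\|_{-1,h}^2$, while Young's inequality disposes of the $L^2$ cross term, $(\delta^{k+1},\delta^k)_2-\frac12\|\delta^k\|_2^2\le\frac12\|\delta^{k+1}\|_2^2$. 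This reduces the whole difference to $\mathcal{E}_h(\phi^{k+1},\phi^k)-\mathcal{E}_h(\phi^k,\phi^{k-1}) \le \frac12\|\delta^{k+1}\|_2^2 - (\frac{\varepsilon^2}{2}+A\dt)\|\nabla_{h,(4)}\delta^{k+1}\|_2^2 - \frac{1}{\dt}\|\delta^{k+1}\|_{-1,h}^2$.

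The \emph{main obstacle}, and the precise point where the hypothesis $A\ge\frac{1}{16}$ enters, is closing this last line: the term $\frac12\|\delta^{k+1}\|_2^2$ is not individually dominated by either the gradient term or the $H^{-1}$ term. The resolution is the nonstandard interpolation inequality \qref{inequality-0-1}, $\|\delta^{k+1}\|_2^2 \le \|\delta^{k+1}\|_{-1,h}\,\|\nabla_{h,(4)}\delta^{k+1}\|_2$. Writing $X := \|\nabla_{h,(4)}\delta^{k+1}\|_2$ and $Y := \|\delta^{k+1}\|_{-1,h}$, the bound becomes the quadratic form $-(\frac{\varepsilon^2}{2}+A\dt)X^2+\frac12 XY-\frac{1}{\dt}Y^2$, which is negative semi-definite exactly when $\frac14 \le 4(\frac{\varepsilon^2}{2}+A\dt)\frac{1}{\dt}$, i.e. when $A\ge\frac{1}{16}-\frac{\varepsilon^2}{2\dt}$; since $\frac{\varepsilon^2}{2\dt}>0$, the stated condition $A\ge\frac{1}{16}$ suffices, the form is $\le0$, and \qref{CH-eng stab-est} follows. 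I expect the delicate part to be precisely the coefficient matching in the second and third steps, arranged so that this exact quadratic form in $(X,Y)$ emerges and the $\frac{1}{16}$ threshold is recovered.
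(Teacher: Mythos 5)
Your proof is correct and follows essentially the same route as the paper: testing with $(-\Delta_{h,(4)})^{-1}(\phi^{k+1}-\phi^k)$, convexity of the quartic term, Cauchy--Schwarz on the extrapolated concave term, and the key interpolation inequality \qref{inequality-0-1} to absorb $\frac12\|\phi^{k+1}-\phi^k\|_2^2$, with $A\ge\frac{1}{16}$ emerging from the same quadratic balancing (the paper applies the Cauchy inequality $\frac{1}{\dt}\|\cdot\|_{-1,h}^2+A\dt\|\nabla_{h,(4)}\cdot\|_2^2\ge 2A^{1/2}\|\cdot\|_{-1,h}\|\nabla_{h,(4)}\cdot\|_2$ and discards the $\varepsilon^2$ contribution, whereas you keep it and note the marginally weaker sufficient condition $A\ge\frac{1}{16}-\frac{\varepsilon^2}{2\dt}$). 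The only other differences are cosmetic: you retain the cross terms $(\delta^{k+1},\delta^k)_{-1}$ and $(\delta^{k+1},\delta^k)_2$ and complete squares at the end, while the paper dispatches them by Cauchy--Schwarz term by term.
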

	
\begin{rem}
The energy stability for a gradient flow has always played an essential role in the accuracy of long time numerical simulation. Originated from the pioneering references~\cite{elliott93, eyre98}, the related works could also be found for many related physical models, such as  the phase field crystal (PFC) equation and the modified version~\cite{baskaran13a, baskaran13b, hu09, wang11a, wise09};  epitaxial thin film growth models~\cite{chen12, chen14, shen12, wang10a}; non-local gradient model~\cite{guan14b, guan14a}; the Cahn-Hilliard model coupled with fluid flow~\cite{chen16, diegel15a, feng12, LiuY17, wise10}; etc. Meanwhile, most of these works are associated with either the second order centered difference or finite element spatial approximations; this article is the first work to justify the energy stability for a fourth order finite difference scheme, combined with a second order temporal accuracy. 
\end{rem}
	
As a direct consequence of the energy stability, a uniform in time $H_h^1$ bound for the numerical solution is given as follows. 

	\begin{cor} \label{CH: H^1 bound} 
Suppose that the initial data are sufficiently regular so that
	\[
E_h (\phi^0) + \frac{\dt}{4} \| \nabla_{h, (4)} \mu_h^0 \|_2^2 + \frac{\dt^2}{2} \| \Delta_{h, (4)} \mu_h^0 \|_2^2 \le \tilde{C}_0,
	\]
for some $\tilde{C}_0$ that is independent of $h$, and $A\ge \frac{1}{16}$. Then we have the following uniform (in time) $H_h^1$ bound for the numerical solution: 
	\begin{equation}
\nrm{ \phi^m}_{H_h^1} \le \tilde{C}_1 ,  \quad \forall m \ge 1 , \label{CH-H1 stab-0}
	\end{equation} 
in which $\tilde{C}_1$ only depends on $\Omega$, $\varepsilon$ and $\tilde{C}_0$, independent on $h$, $\dt$ and final time.  
	\end{cor}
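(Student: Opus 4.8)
The plan is to combine the monotonicity of the modified energy $\mathcal{E}_h$ furnished by Theorem~\ref{CH-energy stability} with a single controllability computation at the initial step, and then to read off the $H_h^1$ bound from a uniform bound on the discrete energy $E_h$. The crucial preliminary observation is that the hypothesized regularity quantity is \emph{exactly} $\mathcal{E}_h(\phi^0,\phi^{-1})$. Indeed, the ghost-point definition \eqref{scheme-BDF-CH-initial-1} gives $\phi^0 - \phi^{-1} = \dt\, \Delta_{h,(4)} \mu_h^0 \in \mathring{\cal V}_{\rm per}$, and since $\msfT_h = -\Delta_{h,(4)}^{-1}$ on the mean-zero space, the summation-by-parts identity \eqref{summation-1} yields $\| \Delta_{h,(4)} \mu_h^0 \|_{-1,h}^2 = -(\Delta_{h,(4)} \mu_h^0, \mu_h^0)_2 = \| \nabla_{h,(4)} \mu_h^0 \|_2^2$. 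Substituting into \eqref{discrete energy} at the virtual step then produces
$$\mathcal{E}_h(\phi^0,\phi^{-1}) = E_h(\phi^0) + \frac{\dt}{4} \| \nabla_{h,(4)} \mu_h^0 \|_2^2 + \frac{\dt^2}{2} \| \Delta_{h,(4)} \mu_h^0 \|_2^2 \le \tilde{C}_0,$$
which is precisely the stated assumption.

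Next I would argue that the energy-decay estimate \eqref{CH-eng stab-est} extends to the initial step $k=0$. The point is that the proof of Theorem~\ref{CH-energy stability} is purely algebraic in the scheme relations \eqref{scheme-BDF-CH-1}--\eqref{scheme-BDF-CH-2}, and the ghost point in \eqref{scheme-BDF-CH-initial-1} is constructed precisely so that the very first update obeys these same relations with $\overline{\phi^{-1}} = \overline{\phi^0}$ (mass conservation at the virtual level, by Theorem~\ref{CH solvability}). Hence the estimate $\mathcal{E}_h(\phi^{k+1},\phi^k) \le \mathcal{E}_h(\phi^k,\phi^{k-1})$ holds for all $k \ge 0$, and telescoping gives the uniform bound $\mathcal{E}_h(\phi^m,\phi^{m-1}) \le \mathcal{E}_h(\phi^0,\phi^{-1}) \le \tilde{C}_0$ for every $m \ge 1$.

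Finally I would extract the norm. Since the $\|\cdot\|_{-1,h}$ and $\|\cdot\|_2$ contributions in \eqref{discrete energy} are nonnegative, dropping them gives $E_h(\phi^m) \le \tilde{C}_0$. Rewriting \eqref{discrete energy-0} as $E_h(\phi^m) = \tfrac14 \|(\phi^m)^2 - 1\|_2^2 + \tfrac{\varepsilon^2}{2} \| \nabla_{h,(4)} \phi^m \|_2^2$ (both summands nonnegative) bounds each piece by $\tilde{C}_0$. The gradient piece, together with \eqref{energy stability-2}, gives $\| \nabla_h \phi^m \|_2^2 \le \| \nabla_{h,(4)} \phi^m \|_2^2 \le 2\tilde{C}_0/\varepsilon^2$. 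The quartic piece, via the elementary inequality $(s^2-1)^2 \ge \tfrac12 s^4 - 1$, controls $\|\phi^m\|_4$, and a discrete Hölder inequality $\|\phi^m\|_2^2 \le |\Omega|^{1/2} \|\phi^m\|_4^2$ then controls $\|\phi^m\|_2$. Adding these gives $\| \phi^m \|_{H_h^1}^2 = \| \phi^m \|_2^2 + \| \nabla_h \phi^m \|_2^2 \le \tilde{C}_1^2$, with $\tilde{C}_1$ depending only on $\Omega$, $\varepsilon$, and $\tilde{C}_0$, and in particular independent of $h$, $\dt$, and the final time.

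The main obstacle is not in the final norm extraction, which is routine, but in the two load-bearing steps: establishing the short identity $\| \Delta_{h,(4)} \mu_h^0 \|_{-1,h} = \| \nabla_{h,(4)} \mu_h^0 \|_2$ that matches $\mathcal{E}_h(\phi^0,\phi^{-1})$ to the stated hypothesis, and carefully justifying that the decay estimate \eqref{CH-eng stab-est} transfers verbatim to the initial step $k=0$ through the ghost-point construction. Once these are in place, the uniform bound is immediate from telescoping.
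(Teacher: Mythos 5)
Your proposal is correct and follows essentially the same route as the paper: identify the hypothesis with $\mathcal{E}_h(\phi^0,\phi^{-1})$ via $\phi^0-\phi^{-1}=\dt\,\Delta_{h,(4)}\mu_h^0$ and the identity $\|\Delta_{h,(4)}\mu_h^0\|_{-1,h}=\|\nabla_{h,(4)}\mu_h^0\|_2$, telescope the decay estimate of Theorem~\ref{CH-energy stability} down to the initial (ghost-point) step, and then extract the $H_h^1$ bound from $E_h(\phi^m)\le\tilde{C}_0$ together with $\|\nabla_h\phi^m\|_2\le\|\nabla_{h,(4)}\phi^m\|_2$. The only immaterial difference is the final extraction, where the paper absorbs $-\frac12\|\phi^m\|_2^2$ via the pointwise inequality $\frac18\phi^4-\frac12\phi^2\ge-\frac12$, while you split $E_h$ into the nonnegative double-well and gradient parts and recover $\|\phi^m\|_2$ through $\|\phi^m\|_4$ and H\"older.
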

	
With an initial data with sufficient regularity, we could assume that the exact solution has regularity of class $\mathcal{R}$: 
		\begin{equation}
	 \Phi \in \mathcal{R} := H^3 (0,T; C^0) \cap H^2 (0,T; H^4) \cap L^\infty (0,T; H^8).
			\label{assumption:regularity.1}
		\end{equation}  
		
\begin{thm}  \label{thm:convergence}
    	Given initial data $\Phi_0 \in H_{\rm per}^8 (\Omega)$, suppose the exact solution for Cahn-Hilliard equation~\eqref{CH equation} is of regularity class $\mathcal{R}$. Then, provided $\dt$ and $h$ are sufficiently small, for all positive integers $n$, such that $n \dt \le T$, we have
    	\begin{equation}
    		\| \Phi^n - \phi^n \|_2 +  ( \varepsilon^2 \dt   \sum_{m=1}^{n} \| \Delta_h ( \Phi^m - \phi^m ) \|_2^2 )^{1/2}  \le C ( \dt^2 + h^4 ),   \label{CH-convergence-0}
    	\end{equation}
where $C>0$ is independent of $\dt$ and $h$.
\end{thm}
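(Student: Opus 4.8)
The plan is to run a fully discrete energy-type error estimate, organized around the $H^{-1}$ gradient-flow structure, in which the a priori regularity supplied by energy stability is used to tame the cubic nonlinearity. Writing $e^k := \Phi^k - \phi^k$ for the difference between the exact solution sampled on the grid and the numerical solution, I would first establish the local truncation error. Substituting $\Phi$ into \eqref{scheme-BDF-CH-1}--\eqref{scheme-BDF-CH-2} and Taylor-expanding in time, the stencil $\tfrac32\Phi^{k+1}-2\Phi^k+\tfrac12\Phi^{k-1}$ reproduces $\Delta t\,\partial_t\Phi^{k+1}$ to $O(\Delta t^2)$, the extrapolation $2\phi^k-\phi^{k-1}$ matches the linear concave term to $O(\Delta t^2)$, and the Douglas--Dupont term is $O(\Delta t)$ times a first difference, hence $O(\Delta t^2)$; these steps consume the temporal regularity in $\mathcal R$. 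The spatial consistency is precisely where Proposition~\ref{thm-2d} enters: replacing $\Delta$ by $\Delta_{h,(4)}$ both inside $\mu$ and in the outer $\Delta_{h,(4)}\mu$ (a biharmonic-type term) costs $O(h^4)$ in the discrete $\ell^2$ norm provided $\Phi\in H^8$, which is exactly the $L^\infty(0,T;H^8)$ clause of $\mathcal R$. The outcome is a truncation error $\tau^{k+1}$ with $\|\tau^{k+1}\|_2 \le C(\Delta t^2 + h^4)$.

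Next I would form the error equation and test it against $e^{k+1}$ in the $\ell^2$ inner product. The key algebraic identity on the left is the BDF2 G-stability relation
\[
(\tfrac32 e^{k+1}-2e^k+\tfrac12 e^{k-1},\,e^{k+1})_2 = (F^{k+1}-F^k) + \tfrac14\|e^{k+1}-2e^k+e^{k-1}\|_2^2,
\]
with $F^{k+1}:=\tfrac14(\|e^{k+1}\|_2^2+\|2e^{k+1}-e^k\|_2^2)$, producing a telescoping nonnegative time functional. Using self-adjointness of $\Delta_{h,(4)}$ (cf.\ \eqref{summation-1}), the right-hand side is $(\xi^{k+1},\Delta_{h,(4)}e^{k+1})_2 + (\tau^{k+1},e^{k+1})_2$, where $\xi^{k+1}$ is the error in the chemical potential. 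The surface-diffusion piece yields $-\varepsilon^2\|\Delta_{h,(4)}e^{k+1}\|_2^2$, and invoking the second inequality of Lemma~\ref{lem: inequality}, $\|\Delta_h e^{k+1}\|_2\le\|\Delta_{h,(4)}e^{k+1}\|_2$, this is exactly what delivers the $\ell^2(0,T;H_h^2)$ control in \eqref{CH-convergence-0}. The regularization piece telescopes into $-\tfrac{A\Delta t^2}{2}(\|\Delta_{h,(4)}e^{k+1}\|_2^2-\|\Delta_{h,(4)}e^k\|_2^2)$ plus a nonpositive remainder, an $O(\Delta t^2)$ perturbation.

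The hard part will be the cubic term $((\Phi^{k+1})^3-(\phi^{k+1})^3,\Delta_{h,(4)}e^{k+1})_2$, which—unlike in an $H^{-1}$ pairing—is not sign-definite and carries a top-order factor. I would factor $(\Phi^{k+1})^3-(\phi^{k+1})^3 = ((\Phi^{k+1})^2+\Phi^{k+1}\phi^{k+1}+(\phi^{k+1})^2)\,e^{k+1}$ and bound $\|(\Phi^{k+1})^3-(\phi^{k+1})^3\|_2 \le C(\|\Phi^{k+1}\|_6^2+\|\phi^{k+1}\|_6^2)\|e^{k+1}\|_6$ by H\"older. The factor $\|\phi^{k+1}\|_6$ is the crux: it is bounded uniformly in time by the energy-stability $H_h^1$ estimate of Corollary~\ref{CH: H^1 bound} together with the discrete Sobolev embedding $\|f\|_6\le C(\|f\|_2+\|\nabla_h f\|_2)$, while $\|\Phi^{k+1}\|_6$ follows from the regularity of $\Phi$. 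Writing $\|e^{k+1}\|_6\le C(\|e^{k+1}\|_2+\|\nabla_h e^{k+1}\|_2)$ and interpolating $\|\nabla_h e^{k+1}\|_2^2\le\|\Delta_h e^{k+1}\|_2\|e^{k+1}\|_2\le\delta\|\Delta_{h,(4)}e^{k+1}\|_2^2+C_\delta\|e^{k+1}\|_2^2$ (again via Lemma~\ref{lem: inequality}), a Cauchy--Young splitting absorbs the $\Delta_{h,(4)}e^{k+1}$ factor into a fraction of $\varepsilon^2\|\Delta_{h,(4)}e^{k+1}\|_2^2$, leaving only $\|e^{k+1}\|_2^2$. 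The explicit concave term $(-2e^k+e^{k-1},\Delta_{h,(4)}e^{k+1})_2$ and the truncation term are handled by the same device, contributing $C(\|e^k\|_2^2+\|e^{k-1}\|_2^2)$ and $C(\Delta t^2+h^4)^2$.

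Finally I would sum over $k$, telescope $F^{k+1}$ and the regularization, control the $k=0,1$ initialization using $\|e^0\|_2=0$ and the $O(\Delta t^2+h^4)$ bound \eqref{scheme-BDF-CH-initial-2}, and close with the discrete Gronwall inequality ($\Delta t$ small enough to absorb the $F^{k+1}$ coefficient). This gives $\max_n\|e^n\|_2^2 + \varepsilon^2\Delta t\sum_m\|\Delta_h e^m\|_2^2 \le C(\Delta t^2+h^4)^2$, i.e.\ \eqref{CH-convergence-0}. The one genuinely nontrivial ingredient is the nonlinear estimate: it is what forces the a priori $\ell^6$ bound arising from energy stability, and it is the reason the argument needs both the discrete Sobolev embedding and the comparison $\|\Delta_h\cdot\|_2\le\|\Delta_{h,(4)}\cdot\|_2$ of Lemma~\ref{lem: inequality}.
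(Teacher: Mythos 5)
Your proposal is correct, and its overall strategy coincides with the paper's: consistency via Proposition~\ref{thm-2d} under the $H^8$ regularity, testing the error equation with $e^{k+1}$, telescoping the BDF2 and Douglas--Dupont terms, bounding the cubic term through the $\ell^6$ framework (Corollary~\ref{CH: H^1 bound}, the embedding \eqref{embedding-0}, and the comparison/interpolation inequalities of Lemma~\ref{lem: inequality}), absorbing the top-order factor into $\varepsilon^2 \| \Delta_{h,(4)} e^{k+1} \|_2^2$ by Young's inequality, and closing with a discrete Gronwall argument. Two differences in execution are worth recording. First, your bookkeeping of the BDF2 term uses the exact G-stability identity with $F^{k+1}=\frac14 \left( \| e^{k+1} \|_2^2+\| 2e^{k+1}-e^k \|_2^2 \right)$, which is nonnegative and satisfies $F^{k+1}\ge\frac14 \| e^{k+1} \|_2^2$, so a standard discrete Gronwall inequality applies directly. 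The paper instead uses the one-sided inequality \eqref{CH-convergence-2}, whose resulting functional ${\cal G}^{k+1}$ in \eqref{CH-convergence-6-2} carries the negative term $-\frac14 \| \tilde{\phi}^k \|_2^2$; to recover $\| \tilde{\phi}^{k+1} \|_2^2$ from it, the paper needs the geometric-sum induction \eqref{CH-convergence-6-3} and the modified Gronwall lemma with weights $\alpha^{m-j}$ (Lemma~\ref{lem:discrete Gronwall}). Your version buys a cleaner closure that avoids both devices. Second, and more substantively, you retain and estimate the explicit concave extrapolation term $\dt \left( -2e^k+e^{k-1} , \Delta_{h,(4)} e^{k+1} \right)_2$, splitting it by Cauchy--Young into $C \left( \| e^k \|_2^2 + \| e^{k-1} \|_2^2 \right)$ plus a small multiple of $\| \Delta_{h,(4)} e^{k+1} \|_2^2$. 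This term is present in the paper's error equation \eqref{CH-consistency-2-1} but is silently dropped when passing to the tested identity \eqref{CH-convergence-1}; your treatment fills that gap, and it is the right treatment, since the resulting lower-order contributions are absorbed in the Gronwall step, exactly as you say. Your handling of the initialization ($e^0=0$ together with the ghost-value bound \eqref{scheme-BDF-CH-initial-2}, which controls $F^0$) is also the step the paper leaves implicit.
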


\section{The detailed proof} \label{sec:proof}

\subsection{Proof of Theorem~\ref{CH solvability}: unique solvability} 

\begin{proof} 
We see that the scheme \eqref{scheme-BDF-CH-1}-\eqref{scheme-BDF-CH-2} can be rewritten as 
\begin{eqnarray} 
\mathcal{N}_h [\phi] &=& f , \label{eqn:operator4th2nd}
\\
  \mbox{with} \, \, 
\mathcal{N}_h [\phi] &:=&-\Delta_{h,(4)}^{-1}\left( \frac32 \phi - 2 \phi^{k} + \frac12\phi^{k-1}\right) + \dt \phi^3 - \dt (A \dt+\varepsilon^2) \Delta_{h,(4)} \phi, \nonumber 
\\
 f &:=&  -2 \dt \phi^k + \dt\phi^{k-1} + A \dt^2 \Delta_{h,(4)}\phi^{k} . \nonumber 
\end{eqnarray}
Meanwhile, the nonlinear equation \eqref{eqn:operator4th2nd} can be recast as a minimization problem for the following discrete energy functional: 
\begin{equation}
F_h [\phi] :=  \frac{1}{3} \nrm{ \frac32\phi - 2 \phi^{k} + \frac12 \phi^{k-1}}_{-1,h}^2  + \frac{\dt}{4} \nrm{\phi}_{4}^4 + \frac{\dt}{2} (A \dt+ \varepsilon^2) \nrm{\nabla_{h,(4)} \phi}_{2}^2 - (f ,\phi)_2 , \label{eqn:min-energy-4th-2nd}
\end{equation}
for any $\phi \in \mathcal{V}_{\rm per}$. In turn, the strong convexity of $F_h$ (in terms of $\phi$), over the hyperplane of $\bar{\phi} = \beta_0$, implies a unique numerical solution for \eqref{scheme-BDF-CH-1}-\eqref{scheme-BDF-CH-2}. 

  By taking a discrete summation of \eqref{scheme-BDF-CH-1}, and making use of the fact that $\overline{\Delta_{h, (4)} \mu_h^{k+1} } =0$, as well as the mass conservation of the previous time steps: $\overline{\phi^k} = \overline{\phi^{k-1}} = \beta_0$, we are able to conclude that $\overline{\phi^{k+1}} = \beta_0$, for any $k \ge 0$. 
\end{proof} 

\subsection{Proof of Theorem~\ref{CH-energy stability}: energy stability} 

\begin{proof} 
  Since $\phi^{k+1} - \phi^k \in \mathring{\cal V}_{\rm per}$, we take a discrete inner product with (\ref{scheme-BDF-CH-1}) by $(-\Delta_{h, (4)})^{-1} (\phi^{k+1} - \phi^k)$, with the following inequalities derived: 
\begin{eqnarray} 
  && 
  \left(  \frac{\frac32 \phi^{k+1} - 2 \phi^k + \frac12 \phi^{k-1}}{2 \dt} , 
  (-\Delta_{h, (4)})^{-1} (\phi^{k+1} - \phi^k)  \right)_2   \nonumber 
\\
  &=&   
  \frac{1}{\dt}  \left(  
   \frac32 \nrm{ \phi^{k+1} - \phi^k }_{-1,h}^2 
  - \frac12 ( \phi^k - \phi^{k-1} , \phi^{k+1} - \phi^k )_{-1} \right) \nonumber 
\\
  &\ge&  
  \frac{1}{\dt}  \left(  
    \frac54 \nrm{ \phi^{k+1} - \phi^k }_{-1,h}^2 
  - \frac14 \nrm{ \phi^k - \phi^{k-1} }_{-1,h}^2  \right) , 
    \label{scheme-BDF-stability-1} 
\\
  &&
   \left(  - \Delta_{h, (4)} ( (\phi^{k+1})^3 )  , 
  (-\Delta_{h, (4)})^{-1} (\phi^{k+1} - \phi^k)  \right)_2  \nonumber 
\\
  &=&
  \left(  (\phi^{k+1})^3  ,  \phi^{k+1} - \phi^k  \right)_2
  \ge \frac14 (  \| \phi^{k+1} \|_4^4 - \| \phi^k \|_4^4 )  , 
    \label{scheme-BDF-stability-2} 
\\
  &&
  \left(  \Delta_{h, (4)}^2 \phi^{k+1}  , 
  (-\Delta_{h, (4)})^{-1} (\phi^{k+1} - \phi^k)  \right)_2  
  =  \left( -  \Delta_{h, (4)} \phi^{k+1}  , 
   \phi^{k+1} - \phi^k  \right)_2 \nonumber 
\\
  &=& 
   \frac12 \left(  \| \nabla_{h, (4)} \phi^{k+1} \|^2 
  - \| \nabla_{h, (4)} \phi^k \|^2   
  + \| \nabla_{h, (4)} ( \phi^{k+1} - \phi^k ) \|^2 \right)  , 
    \label{scheme-BDF-stability-3} 
\\
  &&
  \dt \left(  \Delta_{h, (4)}^2 ( \phi^{k+1} - \phi^k ) , 
  (-\Delta_{h, (4)})^{-1} (\phi^{k+1} - \phi^k)  \right)_2  
  =  \dt \| \nabla_{h, (4)}  ( \phi^{k+1} - \phi^k ) \|^2  , 
    \label{scheme-BDF-stability-4}  
\\
  &&
  \left(  \Delta_{h, (4)}  ( 2 \phi^k - \phi^{k-1})  , 
  (-\Delta_{h, (4)})^{-1} (\phi^{k+1} - \phi^k)  \right)_2  
  = - \left(  2 \phi^k - \phi^{k-1}  , 
  \phi^{k+1} - \phi^k)  \right)_2   \nonumber 
\\
  &\ge&   
    - \frac12 \left(  \| \phi^{k+1} \|_2^2 - \| \phi^k \|_2^2   \right)  
  - \frac12 \| \phi^k - \phi^{k-1} \|_2^2  .  
    \label{scheme-BDF-stability-5}    
\end{eqnarray} 
Meanwhile, an application of Cauchy inequality indicates the following estimate: 
\begin{eqnarray} 
  &&
   \frac{1}{\dt} \nrm{ \phi^{k+1} - \phi^k }_{-1,h}^2
   + A \dt \| \nabla_{h, (4)}( \phi^{k+1} - \phi^k ) \|_2^2   \nonumber 
\\
  &\ge& 
   2 A^{1/2} \nrm{ \phi^{n+1} - \phi^n }_{-1,h} 
   \cdot \| \nabla_{h, (4)}( \phi^{n+1} - \phi^n ) \|_2  
   \ge   2 A^{1/2}  \| \phi^{n+1} - \phi^n \|_2^2 , 
  \label{scheme-BDF-stability-6}    
\end{eqnarray} 
in which~\eqref{inequality-0-1} (in Lemma~\ref{lem: inequality}) has been used in the second step. In turn, a combination of (\ref{scheme-BDF-stability-1})-(\ref{scheme-BDF-stability-5}) and (\ref{scheme-BDF-stability-6}) yields 
\begin{eqnarray} 
  &&
  E_h (\phi^{k+1}) - E_h (\phi^k) 
  + \frac{1}{4 \dt}  \left(  \| \phi^{k+1} - \phi^k \|_{-1,h}^2 
  - \| \phi^k - \phi^{k-1} \|_{-1,h}^2  \right)  \nonumber 
\\
  &&   
  + \frac12 \left( \| \phi^{n+1} - \phi^n \|_2^2 
  - \| \phi^n - \phi^{n-1} \|_2^2  \right) 
  \le ( - 2 A^{1/2} + \frac12 ) \| \phi^{n+1} - \phi^n \|_2^2  \le 0 ,   \quad \mbox{if $A \ge \frac{1}{16}$} .   \label{scheme-BDF-stability-7}    
\end{eqnarray} 
Then we arrive at~\eqref{CH-eng stab-est}, provided that $A \ge \frac{1}{16}$. This completes the proof of Theorem~\ref{CH-energy stability}. 
\end{proof} 

\subsection{Proof of Corollary~\ref{CH: H^1 bound}: uniform in time $H_h^1$ bound} 

\begin{proof} 
As a result of \eqref{CH-eng stab-est}, the following energy bound is available: 
\begin{eqnarray} 
  E_h (\phi^m) &\le& \mathcal{E}_h (\phi^{m}, \phi^{m-1}) \le   \mathcal{E}_h (\phi^0, \phi^{-1})  = E_h (\phi^0) + \frac{1}{4 \dt}  \| \phi^0 - \phi^{-1} \|_{-1,h}^2    
  + \frac12 \| \phi^0 - \phi^{-1} \|_2^2   \nonumber 
\\
  &=& 
  E_h (\phi^0) + \frac{\dt}{4} \| \nabla_{h, (4)} \mu_h^0 \|_2^2 + \frac{\dt^2}{2} \| \Delta_{h, (4)} \mu_h^0 \|_2^2 \le \tilde{C}_0 , \quad \forall m \ge 1. \label{CH-H1 bound-1} 
\end{eqnarray}
On the other hand, the point-wise quadratic inequality, $\frac18 \phi^4 - \frac12 \phi^2 \ge - \frac12$, implies that 
\begin{eqnarray} 
  \frac18 \| \phi^m \|_4^4 - \frac12 \| \phi^m \|_2^2 \ge - \frac12 | \Omega | . 
  \label{CH-H1 bound-2} 
\end{eqnarray}
Its substitution into~\eqref{CH-H1 bound-1} yields 
\begin{eqnarray} 
  \frac12 \| \phi^m \|_2^2 +  \frac{\varepsilon^2}{2}  \| \nabla_{h,(4)} \phi^m \|_2^2 
  \le \tilde{C}_0 + \frac34 | \Omega | ,  \, \, \mbox{so that} \, \, 
  \| \phi^m \|_2^2 +   \| \nabla_{h,(4)} \phi^m \|_2^2 
  \le 2 \varepsilon^{-2} (\tilde{C}_0 + \frac34 | \Omega |) . 
   \label{CH-H1 bound-3} 
\end{eqnarray}
In turn, we arrive at 
\begin{eqnarray} 
  &&
  \| \phi^m \|_{H_h^1}^2  \le  \| \phi^m \|_2^2 +   \| \nabla_h \phi^m \|_2^2  
   \le  \| \phi^m \|_2^2 +   \| \nabla_{h,(4)} \phi^m \|_2^2 
  \le 2 \varepsilon^{-2} (\tilde{C}_0 + \frac34 | \Omega |) ,  \nonumber 
\\
  &&  \mbox{so that} \quad 
  \| \phi^m \|_{H_h^1}  
  \le \varepsilon^{-1} \Bigl( 2 (\tilde{C}_0 + \frac34 | \Omega |)  \Bigr)^{1/2} := \tilde{C}_1 ,  \quad \forall m \ge 1 , 
   \label{CH-H1 bound-4}    
\end{eqnarray}
in which the second step comes from an obvious fact that $\| \nabla_h \phi^m \|_2 \le \| \nabla_{h, (4)} \phi^m \|_2$. This completes the proof of Corollary~\ref{CH: H^1 bound}. 
\end{proof} 

\begin{rem}  \label{rem:L6 est}
As a combination of the uniform in time $H_h^1$ bound~\eqref{CH-H1 stab-0} and the discrete Sobolev embedding inequality~\eqref{embedding-0}, we arrive at a uniform in time $\ell^6$ estimate for the numerical solution: 
\begin{eqnarray} 
  \| \phi^m \|_6 \le C \tilde{C}_1 ,  \quad \forall m \ge 1 .  \label{CH-L6 est-0} 
\end{eqnarray} 
This estimate will be useful in the convergence analysis presented below.  
\end{rem}

\subsection{Proof of Theorem~\ref{thm:convergence}: the $\ell^2 (0,T; \ell^2) \cap \ell^2 (0, T; H_h^2)$ convergence analysis} 

Before the $\ell^2 (0,T; \ell^2) \cap \ell^2 (0, T; H_h^2)$ convergence analysis, we recall a modified version of discrete Gronwall inequality, excerpted from~\cite{diegel17}; this result will be used in the convergence estimate, due to the 2nd order BDF stencil. 

\begin{lem} \label{lem:discrete Gronwall} \cite{diegel17} 
Fix $T>0$. Let $M$ be a positive integer, with $\dt \le \frac{T}{M}$. Suppose $\left\{a_m\right\}_{m=0}^M$, $\left\{b_m\right\}_{m=0}^M$ and $\left\{c_m\right\}_{m=0}^{M-1}$ are non-negative sequences such that $\dt \sum_{m=0}^{M-1} c_m \le D_1$, with $D_1$  independent of $\dt$ and $M$.  Suppose that, for all $\dt>0$ and for some constant $0 < \alpha < 1$, 
	\begin{equation}
a_\ell + \dt \sum_{m=0}^{\ell} b_m \le D_2 +\dt \sum_{m=0}^{\ell-1} c_m \sum_{j=0}^m \alpha^{m-j} a_j ,  \quad \forall \, 1\le \ell \le  M , 
	\label{eq:gronwall-assumption}
	\end{equation}
where $D_2>0$ is a constant independent of $\dt$ and $M$.  Then, for all $\dt>0$, 
	\begin{equation}
a_\ell +\dt \sum_{m=0}^{\ell} b_m \le ( D_2 + a_0 D_1) \exp\left(\frac{D_1}{1-\alpha}\right) ,  \quad \forall \, 1\le \ell \le  M . 
	\label{eq:gronwall-conclusion}
	\end{equation}
\end{lem}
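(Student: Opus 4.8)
The plan is to reduce the ``memory'' hypothesis \eqref{eq:gronwall-assumption}, whose right-hand side contains the geometrically weighted double sum, to an ordinary (single-sum) first-order linear recursion, and then close the argument with the elementary product estimate $\prod_i(1+\kappa_i)\le \exp(\sum_i\kappa_i)$. The central object is the weighted partial sum $\sigma_m := \sum_{j=0}^m \alpha^{m-j} a_j \ge 0$, together with the running quantity $y_\ell := D_2 + \dt \sum_{m=0}^{\ell-1} c_m \sigma_m$, so that \eqref{eq:gronwall-assumption} reads $a_\ell + \dt\sum_{m=0}^\ell b_m \le y_\ell$. Three elementary facts would be recorded first: $y_\ell$ is nondecreasing in $\ell$ (since $c_m,\sigma_m\ge0$); dropping the nonnegative $b$-terms gives $a_j\le y_j$ for every $j\ge1$; and the quantity to be estimated already satisfies $a_\ell+\dt\sum_{m=0}^\ell b_m\le y_\ell$. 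Hence it suffices to bound $y_\ell$.

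The crux is a pointwise estimate for $\sigma_m$ in terms of the single running quantity $y_m$. First I would split off the initial term, $\sigma_m = \alpha^m a_0 + \sum_{j=1}^m \alpha^{m-j} a_j$, bound $a_j\le y_j\le y_m$ by monotonicity for $1\le j\le m$, and sum the geometric weights via $\sum_{j=1}^m \alpha^{m-j}\le \sum_{r=0}^{\infty}\alpha^r = \frac{1}{1-\alpha}$. This yields $\sigma_m \le \alpha^m a_0 + \frac{1}{1-\alpha}\, y_m$, which is exactly the step where the factor $1/(1-\alpha)$ enters. It is also where the isolated $a_0$ contribution is controlled cleanly: since $\alpha^m\le1$ and $\dt\sum_m c_m\le D_1$, the initial-data part will contribute only the term $a_0 D_1$.

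Substituting this estimate into the identity $y_{\ell+1} = y_\ell + \dt\, c_\ell \sigma_\ell$ produces the linear recursion
\begin{equation}
  y_{\ell+1} \le \Bigl( 1 + \frac{\dt\, c_\ell}{1-\alpha} \Bigr) y_\ell + \dt\, c_\ell\, \alpha^\ell a_0 , \qquad y_0 = D_2 . \nonumber
\end{equation}
Unrolling this recursion (the discrete integrating-factor computation) bounds $y_\ell$ by $\bigl(\prod_{i=0}^{\ell-1}(1+\frac{\dt c_i}{1-\alpha})\bigr) D_2$ plus a forced term whose every partial product is again at most $\prod_{i}(1+\frac{\dt c_i}{1-\alpha})$. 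Using $\prod_i(1+\frac{\dt c_i}{1-\alpha}) \le \exp\bigl(\frac{\dt}{1-\alpha}\sum_i c_i\bigr) \le \exp\bigl(\frac{D_1}{1-\alpha}\bigr)$, together with $\dt\sum_i c_i\alpha^i \le \dt\sum_i c_i \le D_1$ for the forced term, gives $y_\ell \le (D_2 + a_0 D_1)\exp\bigl(\frac{D_1}{1-\alpha}\bigr)$, which is precisely \eqref{eq:gronwall-conclusion} once combined with $a_\ell+\dt\sum_{m=0}^\ell b_m\le y_\ell$.

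The main obstacle is the geometrically weighted double sum in \eqref{eq:gronwall-assumption}: a naive bound replacing $\sigma_m$ by a running maximum, or first solving a Gronwall inequality for $\sigma$ and then resubstituting, would either lose the sharp rate $\exp(D_1/(1-\alpha))$ or produce a strictly larger constant than claimed. The point is that the geometric kernel $\alpha^{m-j}$ must be handled uniformly — its total mass $1/(1-\alpha)$ is independent of $m$ and $\ell$ — so that $\sigma_m$ collapses to the single running quantity $y_m$ with the correct coefficient, after which the estimate becomes a standard first-order linear recursion. I would also take care with the index bookkeeping (the split $j\ge1$ versus $j=0$, and $\ell$ versus $\ell+1$ in $y$), since this is exactly what separates the claimed constant $D_2 + a_0 D_1$ from a looser one.
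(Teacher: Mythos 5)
Your proof is correct. A point of comparison worth noting at the outset: the paper itself contains no proof of this lemma---it is quoted from \cite{diegel17} and used as a black box---so your argument serves as a complete, self-contained derivation rather than an alternative to anything in the text. Checking the details: the pointwise bound $\sigma_m \le \alpha^m a_0 + \frac{1}{1-\alpha}\, y_m$ is valid because $a_j \le y_j \le y_m$ holds for $1\le j\le m$ (monotonicity of $y$ plus dropping the nonnegative $b$-terms), and you correctly route the $j=0$ contribution through the separate $\alpha^m a_0$ term---this matters, since the hypothesis \eqref{eq:gronwall-assumption} is only assumed for $\ell\ge 1$ and gives no control of $a_0$ by $y_0=D_2$. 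The resulting recursion $y_{\ell+1}\le \bigl(1+\frac{\dt\, c_\ell}{1-\alpha}\bigr)y_\ell + \dt\, c_\ell\,\alpha^\ell a_0$ unrolls with every partial product bounded by $\exp\bigl(\frac{\dt}{1-\alpha}\sum_i c_i\bigr)\le \exp\bigl(\frac{D_1}{1-\alpha}\bigr)$ (the partial-sum bound $\dt\sum_{i=0}^{\ell-1}c_i\le D_1$ is legitimate since the $c_i$ are nonnegative), and the forced term contributes at most $a_0 D_1$ after using $\alpha^i\le 1$; this reproduces exactly the claimed constant $(D_2+a_0 D_1)\exp\bigl(\frac{D_1}{1-\alpha}\bigr)$, not a looser one. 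The index bookkeeping also checks out: $y_\ell$ is defined up to $\ell=M$ using only $c_0,\dots,c_{M-1}$, matching the range $1\le\ell\le M$ in \eqref{eq:gronwall-conclusion}. Your route---collapsing the geometrically weighted memory term via its uniform kernel mass $\frac{1}{1-\alpha}$ and then running a standard first-order discrete Gronwall step---is the natural argument for inequalities of this form and is, in spirit, the same mechanism exploited in the cited reference.
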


\begin{proof} 
For $\Phi \in \mathcal{R}$, a careful consistency analysis indicates the following truncation error estimate: 
\begin{eqnarray}
   \frac{\frac32 \Phi^{k+1} - 2 \Phi^k + \frac12 \Phi^{k-1}}{\dt}  &=& \Delta_{h,(4)} \Bigl(  (\Phi^{k+1})^3 - 2 \Phi^k + \Phi^{k-1} - \varepsilon^2 \Delta_{h,(4)} \Phi^{k+1}  \nonumber 
\\
  &&  \qquad 
  - A \dt \Delta_{h,(4)} ( \Phi^{k+1} - \Phi^k ) \Bigr) + \tau^{k+1} ,  
	\label{CH-consistency-1}
	\end{eqnarray} 
with $\| \tau^{k+1} \|_2 \le C (\dt^2 + h^4)$. The derivation of~\eqref{CH-consistency-1} is accomplished with the help of Proposition~\ref{thm-2d} and other related estimates; the details are left to interested readers. 

The numerical error function is defined at a point-wise level: 
\begin{eqnarray} 
  \tilde{\phi}^k := \Phi^k - \phi^k ,  \quad \forall m \ge 0 ,   \label{CH-error function-1}
\end{eqnarray} 
In turn, subtracting the numerical scheme \eqref{scheme-BDF-CH-1}-\eqref{scheme-BDF-CH-2} from (\ref{CH-consistency-1}) gives 
\begin{eqnarray}
   \frac{\frac32 \tilde{\phi}^{k+1} - 2 \tilde{\phi}^k + \frac12 \tilde{\phi}^{k-1}}{\dt}  &=& 
   \Delta_{h,(4)} \Bigl(  {\cal NL} (\Phi^{k+1}, \phi^{k+1}) - 2 \tilde{\phi}^k + \tilde{\phi}^{k-1} - \varepsilon^2 \Delta_{h,(4)} \tilde{\phi}^{k+1}  \nonumber 
\\
  &&  \quad  
  - A \dt \Delta_{h,(4)} ( \tilde{\phi}^{k+1} - \tilde{\phi}^k ) \Bigr) + \tau^{k+1} ,  
	\label{CH-consistency-2-1} 
\\
   \mbox{with} \quad   
  {\cal NL} (\Phi^{k+1}, \phi^{k+1})  &=& ( (\Phi^{k+1})^2 + \Phi^{k+1} \phi^{k+1} 
  + (\phi^{k+1})^2 ) \tilde{\phi}^{k+1} .  \label{CH-consistency-2-2}     
	\end{eqnarray} 
	
Taking a discrete inner product with \eqref{CH-consistency-2-1}-\eqref{CH-consistency-2-2} by $ \tilde{\phi}^{k+1}$, with a repeated application of summation by parts, we get 
\begin{eqnarray} 
  \hspace{-0.2in}
  &&
   \Bigl( \frac32 \tilde{\phi}^{k+1} - 2 \tilde{\phi}^k + \frac12 \tilde{\phi}^{k-1} , \tilde{\phi}^{k+1} \Bigr)_2  + \varepsilon^2 \dt \| \Delta_{h,(4)} \tilde{\phi}^{k+1}  \|_2^2 
   +   A \dt^2 \Bigl( \Delta_{h,(4)} ( \tilde{\phi}^{k+1} - \tilde{\phi}^k ) , 
   \Delta_{h,(4)} \tilde{\phi}^{k+1}  \Bigr)_2  \nonumber 
\\
  \hspace{-0.2in}  
  &=&
  \dt \left( {\cal NL} (\Phi^{k+1}, \phi^{k+1}) ,  \Delta_{h,(4)} \tilde{\phi}^{k+1}  \right)_2 
  + \dt ( \tilde{\phi}^{k+1} , \tau^{k+1} )_2 .   \label{CH-convergence-1} 
\end{eqnarray} 
The time marching term could be analyzed as follows:  
\begin{eqnarray} 
  \hspace{-0.3in}
  &&
   \left( \frac32 \tilde{\phi}^{k+1} - 2 \tilde{\phi}^k + \frac12 \tilde{\phi}^{k-1} , 
    \tilde{\phi}^{k+1} \right)_2 
   = \frac32 \left( \tilde{\phi}^{k+1} - \tilde{\phi}^k , \tilde{\phi}^{k+1} \right)_2
   - \frac12 \left( \tilde{\phi}^k - \tilde{\phi}^{k-1} , \tilde{\phi}^{k+1} \right)_2       
    \nonumber 
\\
  \hspace{-0.3in}  
  &\ge&  
  \left( \frac34 \| \tilde{\phi}^{k+1} \|_2^2 - \frac14 \| \tilde{\phi}^k \|_2^2  \right) 
 -  \left( \frac34 \| \tilde{\phi}^k \|_2^2 - \frac14 \| \tilde{\phi}^{k-1} \|_2^2  \right)   
    + \frac12 \left( \| \tilde{\phi}^{k+1} - \tilde{\phi}^k \|_2^2  
   -  \|  \tilde{\phi}^k - \tilde{\phi}^{k-1}  \|_2^2  \right)  . 
   \label{CH-convergence-2}
\end{eqnarray} 
The third term on the left hand side of~\eqref{CH-convergence-1} could be handled as follows:
\begin{eqnarray} 
  \Bigl( \Delta_{h,(4)} ( \tilde{\phi}^{k+1} - \tilde{\phi}^k ) , 
   \Delta_{h,(4)} \tilde{\phi}^{k+1}  \Bigr)_2 
   \ge \frac12 ( \| \Delta_{h,(4)} \tilde{\phi}^{k+1} \|_2^2 
   - \| \Delta_{h,(4)} \tilde{\phi}^k \|_2^2 ) .  \label{CH-convergence-3}
\end{eqnarray} 
The term associated with the local truncation error could be bounded with the help of Cauchy inequality: 
\begin{eqnarray} 
  ( \tilde{\phi}^{k+1} , \tau^{k+1} )_2  \le  \| \tilde{\phi}^{k+1}\|_2 \cdot \|  \tau^{k+1} \|_2  
  \le  \frac12 ( \| \tilde{\phi}^{k+1}\|_2^2 + \|  \tau^{k+1} \|_2^2 ) .  \label{CH-convergence-4}
\end{eqnarray} 

For the nonlinear error term, we begin with an application of discrete H\"older inequality: 
\begin{eqnarray} 
  \| {\cal NL} (\Phi^{k+1} \|_2 &\le& \| (\Phi^{k+1})^2 + \Phi^{k+1} \phi^{k+1} 
  + (\phi^{k+1})^2 \|_3 \cdot \| \tilde{\phi}^{k+1} \|_6  \nonumber 
\\
  &\le&
  C ( \| \Phi^{k+1} \|_6^2 + \| \phi^{k+1} \|_6^2 )  \| \tilde{\phi}^{k+1} \|_6  
  \le C ( (C^*)^2 + \tilde{C}_1^2 )   \| \tilde{\phi}^{k+1} \|_6 ,  \label{CH-convergence-5-1}
\end{eqnarray} 
in which the estimates $\| \Phi^{k+1} \|_6 \le C^*$, and $\| \phi^{k+1} \|_6 \le C \tilde{C}_1$ (which comes from the uniform in time estimate~\eqref{CH-L6 est-0} for the numerical solution, as given by Remark~\ref{rem:L6 est}), have been used. On the other hand, we make use of the discrete Sobolev embedding inequality~\eqref{embedding-0} again, and obtain
\begin{eqnarray} 
  \| \tilde{\phi}^{k+1} \|_6  &\le& C ( \| \tilde{\phi}^{k+1} \|_2 + \| \nabla_h \tilde{\phi}^{k+1} \|_2 ) 
  \le C ( \| \tilde{\phi}^{k+1} \|_2 + \| \nabla_{h, (4)} \tilde{\phi}^{k+1} \|_2 )   \nonumber 
\\
  &\le&
  C ( \| \tilde{\phi}^{k+1} \|_2 + \| \tilde{\phi}^{k+1} \|_2^{1/2} \cdot   
  \| \Delta_{h, (4)} \tilde{\phi}^{k+1} \|_2^{1/2} )  ,  \label{CH-convergence-5-2}
\end{eqnarray} 
in which the second step comes from the fact that $ \| \nabla_h \tilde{\phi}^{k+1} \|_2  \le \| \nabla_{h, (4)} \tilde{\phi}^{k+1} \|_2$, while the last step is based on the following estimate: 
\begin{eqnarray} 
  \| \nabla_{h, (4)} \tilde{\phi}^{k+1} \|_2^2 = ( \tilde{\phi}^{k+1} , - \Delta_{h, (4)} \tilde{\phi}^{k+1} )_2  \le  
     \| \tilde{\phi}^{k+1} \|_2  \cdot  \| \Delta_{h, (4)} \tilde{\phi}^{k+1} \|_2 )  . 
     \label{CH-convergence-5-3}
\end{eqnarray} 
Consequently, a substitution of~\eqref{CH-convergence-5-2} into \eqref{CH-convergence-5-1} yields 
\begin{eqnarray} 
  \| {\cal NL} (\Phi^{k+1} \|_2    \le  \tilde{C}_2  
  ( \| \tilde{\phi}^{k+1} \|_2 + \| \tilde{\phi}^{k+1} \|_2^{1/2} \cdot   
  \| \Delta_{h, (4)} \tilde{\phi}^{k+1} \|_2^{1/2} )  ,  \label{CH-convergence-5-4}
\end{eqnarray}   
with $\tilde{C}_2 = C ( (C^*)^2 + \tilde{C}_1^2 )$. In turn, a bound for the nonlinear error inner product term could be derived: 
\begin{eqnarray} 
  \left( {\cal NL} (\Phi^{k+1}, \phi^{k+1}) ,  \Delta_{h,(4)} \tilde{\phi}^{k+1}  \right)_2  
  &\le& \| {\cal NL} (\Phi^{k+1} \|_2  \cdot \| \Delta_{h,(4)} \tilde{\phi}^{k+1} \|_2  \nonumber 
\\
  &\le&  \tilde{C}_2  
  ( \| \tilde{\phi}^{k+1} \|_2  \cdot \| \Delta_{h,(4)} \tilde{\phi}^{k+1} \|_2 
  + \| \tilde{\phi}^{k+1} \|_2^{1/2} \cdot   
  \| \Delta_{h, (4)} \tilde{\phi}^{k+1} \|_2^{3/2} )  \nonumber 
\\
  &\le& 
  \tilde{C}_{3, \varepsilon} \| \tilde{\phi}^{k+1} \|_2^2   
  + \frac{\varepsilon^2}{2} \| \Delta_{h, (4)} \tilde{\phi}^{k+1} \|_2^2  ,  
  \label{CH-convergence-5-5}
\end{eqnarray} 
with the Young's inequality applied in the last step. 

Subsequently, a substitution of~\eqref{CH-convergence-2}-\eqref{CH-convergence-4} and \eqref{CH-convergence-5-5} into \eqref{CH-convergence-1} yields 
\begin{eqnarray} 
  && 
  {\cal G}^{k+1} - {\cal G}^k 
  + \frac{\varepsilon^2}{2} \dt \| \Delta_{h, (4)} \tilde{\phi}^{k+1} \|_2^2  
  \le ( \tilde{C}_{3, \varepsilon} + \frac12 ) \dt \| \tilde{\phi}^{k+1} \|_2^2  
  + \dt \| \tau^{k+1} \|_2^2 ,  \label{CH-convergence-6-1} 
\\
  && \mbox{with} \quad 
  {\cal G}^{k+1} := \frac34 \| \tilde{\phi}^{k+1} \|_2^2 - \frac14 \| \tilde{\phi}^k \|_2^2 
    + \frac12 \| \tilde{\phi}^{k+1} - \tilde{\phi}^k \|_2^2  
    + \frac{A}{2} \dt^2 \| \Delta_{h,(4)} \tilde{\phi}^{k+1} \|_2^2  . 
    \label{CH-convergence-6-2} 
\end{eqnarray} 
Meanwhile, for the term $\| \tilde{\phi}^{k+1} \|_2^2$, the following inductive estimate is available: 
\begin{eqnarray} 
   \| \tilde{\phi}^{k+1} \|_2^2 &\le& \frac43 {\cal G}^{k+1} + \frac13 \| \tilde{\phi}^k \|_2^2  
   \le \frac43 {\cal G}^{k+1} + \frac49 {\cal G}^k + \frac19 \| \tilde{\phi}^{k-1} \|_2^2  \le ... 
   \nonumber
\\
  &\le& 
  \frac43 \sum_{i=0}^{k+1} (\frac13)^i {\cal G}^{k+1-i} .  \label{CH-convergence-6-3} 
\end{eqnarray} 
This in turn leads to the following inequality
\begin{eqnarray}  
  {\cal G}^{k+1} - {\cal G}^k 
  + \frac{\varepsilon^2}{2} \dt \| \Delta_{h, (4)} \tilde{\phi}^{k+1} \|_2^2 
    \le \frac43 ( \tilde{C}_{3, \varepsilon} + \frac12 ) \dt 
    \sum_{i=0}^{k+1} (\frac13)^i {\cal G}^{k+1-i}    
 + \dt \| \tau^{k+1} \|_2^2 .  \label{CH-convergence-6-4} 
\end{eqnarray}  
Therefore, with an application of Lemma~\ref{lem:discrete Gronwall}, and making use of the fact that $\| \tau^{k+1} \|_2 \le C (\dt^2 + h^4)$, we conclude that 
\begin{eqnarray} 
   {\cal G}^{k+1}  + \varepsilon^2 \dt \sum_{i=1}^{k+1} \| \Delta_{h, (4)} \tilde{\phi}^i \|_2^2  \le \hat{C} ( \dt^4 + h^8) ,   \label{CH-convergence-6-5} 
\end{eqnarray}  
with $\hat{C}$ independent on $\dt$ and $h$. Furthermore, with an application of~\eqref{CH-convergence-6-3}, we arrive at the desired convergence estimate: 
\begin{eqnarray} 
   \| \tilde{\phi}^{k+1} \|_2  + \Bigl( \varepsilon^2 \dt \sum_{i=1}^{k+1} \| \Delta_h \tilde{\phi}^i \|_2^2  \Bigr)^{1/2} \le C \hat{C}^{1/2} ( \dt^2 + h^4) ,   \label{CH-convergence-6-6} 
\end{eqnarray}  
This completes the proof of Theorem~\ref{thm:convergence}. 
\end{proof}

\section{Numerical results} \label{sec:numerical results}

The numerical implementation of the proposed fourth order finite difference scheme~\eqref{scheme-BDF-CH-1}-\eqref{scheme-BDF-CH-2} requires a nonlinear solver. Meanwhile, since the nonlinear term corresponds to a convex functional, as indicated by~\eqref{eqn:min-energy-4th-2nd}, some ideas associated with convex optimization could be efficiently applied. We use the precondition steepest descent (PSD) iteration to implement this numerical algorithm; such an iteration has been proposed and analyzed for the regularized p-Laplacian problem in recent works~\cite{feng2017preconditioned, fengW17c}, and its extension to Cahn-Hilliard-type problem is straightforward.

\subsection{Precondition steepest descent (PSD) solver} 
The main idea of the PSD solver is to use a linearized version of the nonlinear operator as a pre-conditioner, or in other words, as a metric for choosing the search direction.  A linearized version of the nonlinear operator $\mathcal{L}_h: \mathring{\mathcal V}_{\rm per} \to \mathring{\mathcal V}_{\rm per}$, is defined as follows: 
	\begin{eqnarray}
{\mathcal L}_h [\psi] := -\Delta_{h,(4)}^{-1} \psi + \dt \psi - \dt ( \varepsilon^2 + A  \dt ) \Delta_{h,(4)}^2 \psi.
	\end{eqnarray}
Given the current iterate $\phi^{(n)}\in {\mathcal V}_{\rm per}$, we define the following \emph{search direction} problem: find $d^{(n)} \in \mathring{\mathcal V}_{\rm per}$ such that
\[
{\mathcal L}_h [d^{(n)}]= f - \mathcal{N}_h [\phi^{(n)}]:=r^{(n)},
\]
where $r^{(n)}$ is the nonlinear residual of the $n^{\rm th}$ iterate $\phi^{(n)}$. This equation can be solved efficiently using the Fast Fourier Transform (FFT).

In turn, the next iterate is given by
	\begin{equation}
\phi^{(n+1)} = \phi^{(n)} + \overline{\alpha} d^{(n)},
	\end{equation}
where $\overline{\alpha}\in\mathbb{R}$ is the unique solution to the steepest descent line minimization problem 
	\begin{equation}
\overline{\alpha} := \operatorname*{argmax}_{\alpha\in\mathbb{R}} F_h [\phi^{(n)} + \alpha d^{(n)}]= \operatorname*{argzero}_{\alpha\in\mathbb{R}}\delta F_h [\phi^{(n)} + \alpha d^{(n)}] (d^{(n)}) .
	\label{eqn-search}
	\end{equation}

The geometric convergence analysis of the PSD solver has been established in~\cite{feng2017preconditioned}, for egularized p-Laplacian problem. For the Cahn-Hilliard-type problem \eqref{scheme-BDF-CH-1}-\eqref{scheme-BDF-CH-2}, the corresponding nonlinearity is weaker than that of the p-Laplacian  problem, and the geometric convergence rate: $\phi^{(n)} \to \phi^{k+1}$, (where $\phi^{k+1}$ is the exact numerical solution to \eqref{scheme-BDF-CH-1}-\eqref{scheme-BDF-CH-2}), as $n\to \infty$, is expected to be derived in a similar way; the details are left to interested readers.

\subsection{Convergence test for the numerical scheme}

In this subsection we perform some numerical experiments to support the theoretical results, using a uniform Cartesian grid and set periodic boundary conditions. In particular, it is observed that the search direction and Poisson-like equations can also be efficiently solved efficiently by using the Fourier pseudo-spectral method (see the related discussions in~\cite{Boyd2001, cheng2015fourier, GO1977, HGG2007}) and Fast Fourier Transform (FFT). 

To test the convergence rate, we choose the data such that the exact solution of \eqref{CH equation} on the square domain $\Omega=(0, 3.2) \times (0, 3.2)$:
\begin{eqnarray}\label{eqn:init1}
\phi_e(x,y,t) = \frac{1}{2\pi}\sin\big({2\pi x}/{3.2}\big)\cos\big({2\pi y}/{3.2}\big)\cos(t).
\end{eqnarray}
We take a quadratic refinement path for scheme \eqref{scheme-BDF-CH-1}-\eqref{scheme-BDF-CH-2} , i.e.  $\dt=Ch^2$. The final time is taken as $T=0.32$, and we expect the global error to be $\mathcal{O}(h^4)$ under the $\|\cdot\|_\infty$ and $\|\cdot\|_2$ norm, as $h, \dt \to 0$. The other parameter are given by $L_x=L_y=3.2$, $\varepsilon=0.1$.

The error norms and the convergence rates can be found in Table~\ref{tab:cov2nd}, which confirms our theoretical results.

\begin{table}[!htb]
\begin{center}
\caption{The error norms and the convergence rates for the computed solution with scheme \eqref{scheme-BDF-CH-1}-\eqref{scheme-BDF-CH-2} .} \label{tab:cov2nd}
\begin{tabular}{ccccc}
\hline $h$&$\|\phi-\mathcal{I}_h\phi_e\|_\infty$ & Rate& $\|\phi-\mathcal{I}_h\phi_e\|_2$& Rate\\
\hline $\frac{3.2}{16}$& $ 5.4719\times 10^{-5}$&- &
$9.1073\times 10^{-5}$&- 
\\$\frac{3.2}{32}$& $3.5430\times 10^{-6}$ &3.95& 
  $5.7279\times 10^{-6}$ &3.99 
\\ $\frac{3.2}{64}$ & $2.2333\times 10^{-7}$&3.99 &
   $3.5846\times 10^{-7}$&4.00
\\ $\frac{3.2}{128}$&$1.3961\times 10^{-8}$&4.00&
   $2.2367\times 10^{-8}$&4.00
\\
\hline
\end{tabular}
\end{center}
\end{table}

\subsection{Numerical simulation of spinodal decomposition and energy dissipation}

We simulate the spinodal decomposition of a mixed binary fluid and present the energy dissipation in this subsection. The parameters are given by $L_x=L_y=12.8$, $\varepsilon=0.03$, $h=12.8/512$,. The initial data for this simulation is taken as a random field values $\phi^0_{i,j}=\bar{\phi}+0.1\cdot(2r_{i,j}-1)$, with an average composition $\bar{\phi}=0$ and $r_{i,j}\in[0,1]$. For the temporal step size $\dt$, we use increasing values of $\dt$ in the time evolution: $\dt = 0.01$ on the time interval $[0,2000]$ and $\dt = 0.04$ on the time interval $[2000,6000]$. Whenever a new time step size is applied, we initiate the two-step numerical scheme by  taking $\phi^{-1} = \phi^0$, with the initial data $\phi^0$ given by the final time output of the last time period. The snapshots of spinodal decomposition for the proposed numerical scheme, with second order accuracy in time and fourth order accuracy in space, can be found in Figure~\ref{fig:spinodal-2nd}.
 The corresponding energy decay plot is displayed in Figure~\ref{fig:energy-ch-2nd}. 
 
 The log-log plots of energy evolution and the corresponding linear regression in Figure~\ref{fig:energy-ch-2nd} shows that the energy indeed decays like $t^{-1/3}$ for the proposed scheme, which verifies the one-third power law. The detailed scaling ``exponent" is obtained using least squares fits of the computed data up to time $t=400$.  A clear observation of the $a_e t^{-b_e}$ scaling law can be made, 
with $a_e = 2.3670$, $b_e=0.3332$. In other words, an almost perfect $t^{-1/3}$ energy dissipation law is confirmed by our numerical simulation.  
  
 The linear regressions is only taken up to $t = 6000$, since the saturation time would be of the order of $\varepsilon^{-2}$. 

	\begin{figure}
 	\begin{center}
\includegraphics[width=\textwidth]{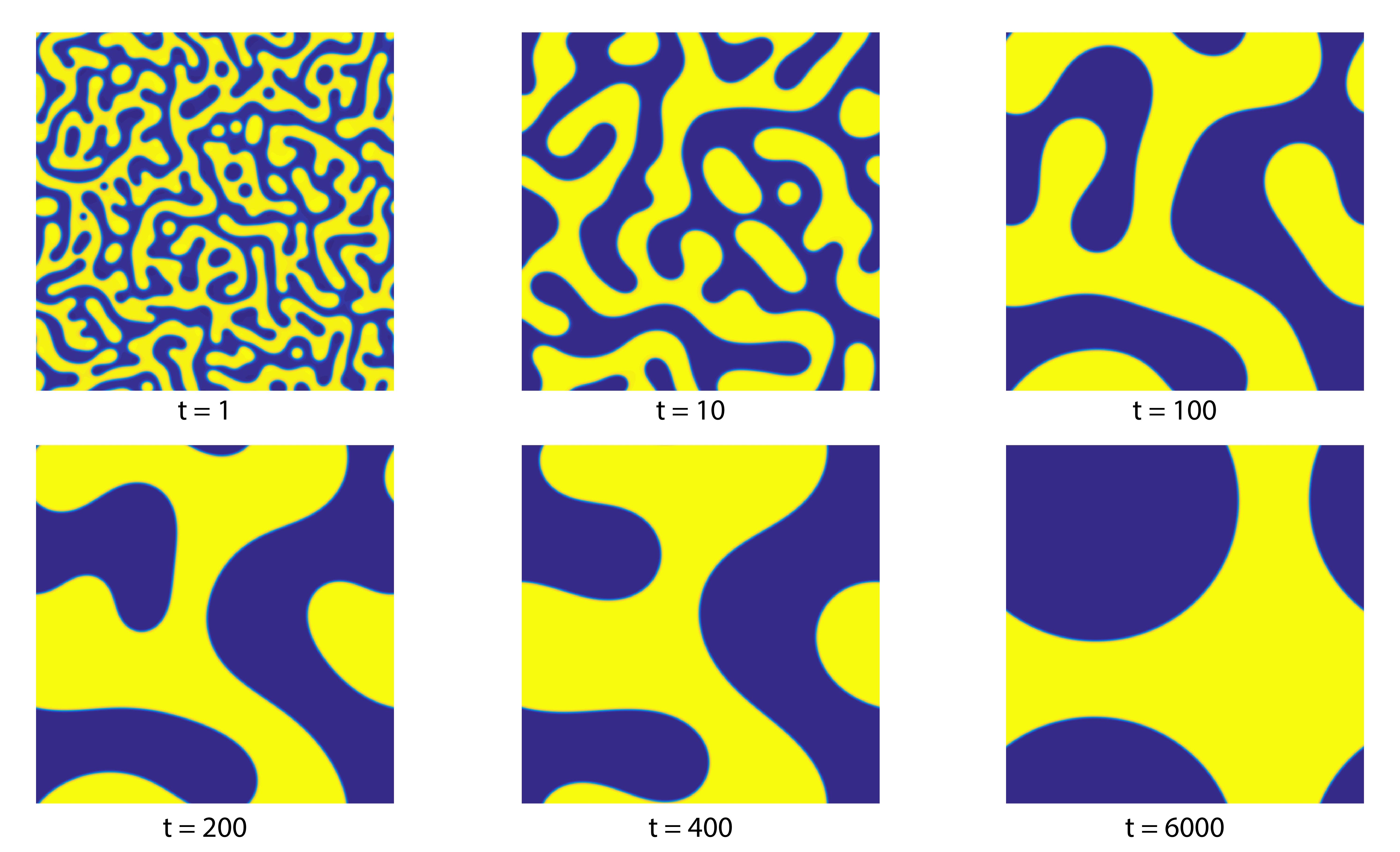}  
 	\end{center}
\caption{(Color online). Snapshots of spinodal decomposition of Cahn-Hilliard equation for a mixed binary fluid with scheme \eqref{scheme-BDF-CH-1}-\eqref{scheme-BDF-CH-2} in $\Omega=(0,12.8) \times (0, 12.8)$ at a sequence of time instants: 1, 10, 100, 200, 400 and 2000. The surface diffusion parameter is taken to be $\varepsilon=0.03$ and the time step size is $\dt=0.01$. }\label{fig:spinodal-2nd}	
	\end{figure}

\begin{figure}[!htp]
\centering
\includegraphics[width=3.5in, height=3.0in]{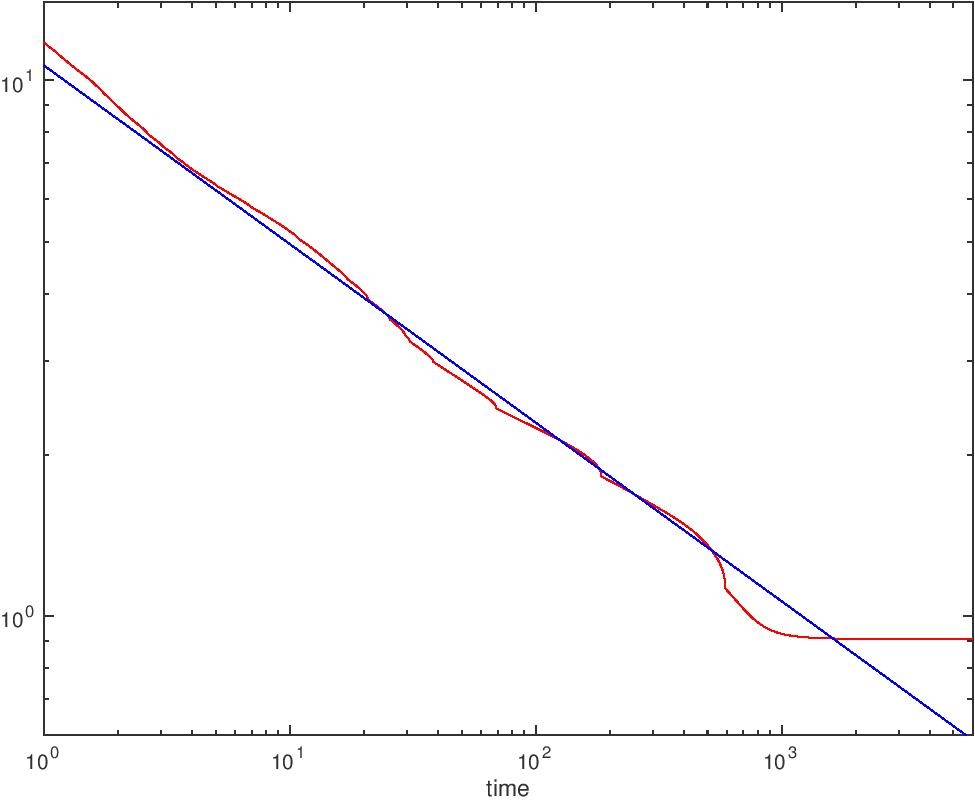}
\caption{The evolutions of discrete energy. The parameters are given in the text and in the caption of Figure \ref{fig:spinodal-2nd}. The energy decays like $t^{-1/3}$, which verifies the one-third power law. More precisely, the linear fit has the form $a_e t^{-b_e}$ with $a_e=2.3670, b_e=-0.3332$.}
\label{fig:energy-ch-2nd}
\end{figure}

\section{Concluding remarks} \label{sec:conclusion}

In this article, we propose and analyze an energy stable fourth order finite difference scheme for the Cahn-Hilliard equation, with second order temporal accuracy. As a preliminary truncation error estimate for the long stencil difference operator, over a uniform numerical grid with a periodic boundary condition, the discrete $\ell^2$ estimate only requires an $H^m$ regularity for the test function, which in turn results in a reduced regularity requirement. In the temporal approximation, we apply a modified BDF algorithm, combined with a second order extrapolation formula applied to the concave term. And also, a second order artificial Douglas-Dupont regularization is included in the numerical scheme, to ensure the energy stability at a discrete level. With such a careful construction, the unique solvability and energy stability are proved for the proposed numerical scheme, and a uniform in time $H_h^1$ bound for the numerical solution is established. As a result of this $H_h^1$ bound, we are able to derive an optimal rate convergence analysis for the proposed numerical scheme, in the $\ell^\infty (0,T; \ell^2) \cap \ell^2 (0,T; H_h^2)$ norm. In addition, a few numerical experiments have confirmed these theoretical results, and the numerical simulations results of spinodal decomposition in a mixed binary fluid have indicated an energy dissipation law of $t^{-1/3}$.

	\appendix

	\section{Proof of Lemma \ref{lem-1}}
	\label{proof:Lemma 1}
	
By substituting (\ref{expansion-2d-f-dis-2}), we see that the first 
inequality of (\ref{est-2d-coefficient-2}) is equivalent to 
\begin{equation} 
  \left| - 4 \mbox{sin}^2 (k \pi h/L) - \frac43 \mbox{sin}^4 (k \pi h/L) 
  + \frac{4 k^2\pi^2 h^2}{L^2} \right|
  \le C_1 h^6 \left( \frac{2 k \pi}{L} \right)^6 ,  
  \quad \forall \, \, 0 \le k \le N . 
  \label{est-2d-coefficient-4}
\end{equation}

  We denote $c_k = k \pi h/L$. Due to the fact that $h = \frac{L}{2 N+1}$, 
we have  
\begin{equation} 
  0 \le c_k \le \frac{\pi}{2} ,  \quad \forall \, \, 1 \le k \le N . 
  \label{est-2d-coefficient-4-2}
\end{equation}

  In turn, we need both the lower and upper bounds of $- 4 \mbox{sin}^2 c_k - \frac43 \mbox{sin}^4 c_k + 4 c_k^2$ to establish the estimate 
(\ref{est-2d-coefficient-4}). For the lower bound, the following inequality is observed:
\begin{equation} 
    h_1 (t) := \mbox{sin}^2 t + \frac13 \mbox{sin}^4 t  
  \le h_2 (t) := t^2 ,  \quad \forall \, \, t \ge 0 . 
  \label{est-2d-coefficient-lemma1-1}
\end{equation} 
The derivation of this inequality is based on the fact that 
\begin{equation} 
  h_1 (0) = h_2 (0) = 0 ,   \quad  h'_1 (0) = h'_2 (0) = 0  , 
\end{equation} 
and a careful comparison between their second order derivatives: 
\begin{equation} 
    h''_1 (t) = 2 \left( \mbox{cos}^2 t + \mbox{sin}^2 t \mbox{cos}^2 t 
  - \frac53 \mbox{sin}^4 t \right)  \le  h''_2 (t) = 2  ,   \quad \forall \, \, t \ge 0 . 
\end{equation} 
As a result of (\ref{est-2d-coefficient-lemma1-1}), we obtain the following 
lower bound: 
\begin{equation} 
    0 \le - 4 \mbox{sin}^2 c_k - \frac43 \mbox{sin}^4 c_k + 4 c_k^2 , 
   \quad \forall \, \, 0 \le k \le N .   
   \label{est-2d-coefficient-lemma1-2}
\end{equation} 

   For the upper bound, we begin with a Taylor expansion for $\mbox{sin} t$: 
\begin{equation} 
  \mbox{sin} t  \ge t - \frac{t^3}{3 !} = t - \frac{t^3}{6} \ge 0 ,  \quad 
  \forall \, \, 0 \le t \le \frac{\pi}{2} ,  \label{est-2d-coefficient-lemma1-3}
\end{equation}
in which the second inequality comes from the range that 
$0 \le t \le \frac{\pi}{2}$. Subsequently, we set $c_k = k \pi h/L$ and arrive at 
\begin{eqnarray} 
   4 \mbox{sin}^2 c_k + \frac43 \mbox{sin}^4 c_k 
  \ge 4  \left(  c_k - \frac{c_k^3}{6} \right)^2 
  +  \frac43  \left(  c_k - \frac{c_k^3}{6} \right)^4 
  = 4 c_k^2 - \frac{7}{9} c_k^6 + \frac{2}{9} c_k^8 
   - \frac{2}{81} c_k^{10}  + \frac{1}{972} c_k^{12}  ,   
  \label{est-2d-coefficient-lemma1-4}
\end{eqnarray}
for any $0 \le k \le N$.  In turn, the following estimate is available: 
\begin{eqnarray} 
    - 4 \mbox{sin}^2 c_k - \frac43 \mbox{sin}^4 c_k + 4 c_k^2  
  \le   \frac{7}{9} c_k^6 - \frac{2}{9} c_k^8 
   + \frac{2}{81} c_k^{10}  - \frac{1}{972} c_k^{12}  ,   
   \quad \forall \, \, 0 \le k \le N . 
  \label{est-2d-coefficient-lemma1-5}
\end{eqnarray}

 Consequently, a combination of (\ref{est-2d-coefficient-lemma1-2}) and 
(\ref{est-2d-coefficient-lemma1-5}) results in 
\begin{eqnarray} 
    \left| - 4 \mbox{sin}^2 c_k - \frac43 \mbox{sin}^4 c_k + 4 c_k^2  \right|
  \le   \frac{7}{9} c_k^6 - \frac{2}{9} c_k^8 
   + \frac{2}{81} c_k^{10}  - \frac{1}{972} c_k^{12}  ,   
   \quad \forall \, \, 0 \le k \le N . 
  \label{est-2d-coefficient-lemma1-6}
\end{eqnarray}
On the other hand, by the definition $c_k = k \pi h/L$, the 
following estimates can be derived: 
\begin{eqnarray} 
  &&
    c_k^6 = h^6 \left( k \pi /L \right)^6  
  \le \frac{1}{64}  h^6 \left( 2 k \pi /L \right)^6 ,  
  \quad \forall \, \, 0 \le k \le N , 
  \label{est-2d-coefficient-lemma1-7-1} 
\\
  &&
  c_k^m = c_k^6 \cdot c_k^{m-6} 
  \le \left( \frac{\pi}{2} \right)^{m-6} 
  \cdot \frac{1}{64} h^6 \left( 2 k \pi /L \right)^6  ,  
  \quad \forall \, \, m \ge 6 ,  \, \, 0 \le k \le N . 
  \label{est-2d-coefficient-lemma1-7-2} 
\end{eqnarray}
Finally, a combination of (\ref{est-2d-coefficient-lemma1-6}), 
(\ref{est-2d-coefficient-lemma1-7-1}) and (\ref{est-2d-coefficient-lemma1-7-2}) implies (\ref{est-2d-coefficient-4}), with an appropriate choice of $C_1$. The proof of the first part of Lemma \ref{lem-1} is finished. The second inequality can be derived in the same manner.

\section{Proof of Lemma \ref{lem-2} } \label{proof:Lemma 2}

We see that the expansion for $\nrm{\Delta^3 f}_{L^2}$ 
in (\ref{est-2d-f-4}) can be decomposed in the following way: 
\begin{eqnarray} 
  &&
  \nrm{\Delta^3 f}_{2}^2 
  = \sum_{k,l=-N}^{N} I_{k,l},  \quad \mbox{with}  \nonumber 
\\  
  &&
  I_{k,l} =  L^2
    \sum_{k_1, l_1=-\infty}^\infty 
    \left( \left( \frac{2 (k+ k_1 N^*) \pi}{L} \right)^{2} 
    +  \left( \frac{2 (l+ l_1 N^*) \pi}{L} \right)^{2} \right)^6    
     \left| \hat{f}_{k+ k_1 N^*, l+ l_1 N^*} \right|^2 .  \label{est-2d-f-5}
\end{eqnarray}
In particular, we observe that 
\begin{eqnarray} 
  & & 
  \left( \left( \frac{2 (k+ k_1 N^*) \pi}{L} \right)^{2} 
    +  \left( \frac{2 (l+ l_1 N^*) \pi}{L} \right)^{2} \right)^6
   \left| \hat{f}_{k+ k_1 N^*, l+ l_1 N^*} \right|^2   \le  \frac{I_{k,l}}{L^2} ,  \nonumber 
\\
  & & 
  \quad \mbox{i.e.}  \quad 
  \left| \hat{f}_{k+ k_1 N^*, l+ l_1 N^*} \right| 
  \le  \left( \left( \frac{2 (k+ k_1 N^*) \pi}{L} \right)^{2} 
    +  \left( \frac{2 (l+ l_1 N^*) \pi}{L} \right)^{2} \right)^{-3} 
     \sqrt{\frac{I_{k,l}}{L^3}} .  
\label{est-2d-coefficient-lemma2-1}
\end{eqnarray}  
Meanwhile, the following fact is obvious 
\begin{equation} 
  \left| \lambda_{kx,(4)} + \frac{4 (k+ k_1 N^*)^2 \pi^2}{L^2}  \right|  
  \le \frac{4 (k+ k_1 N^*)^2 \pi^2}{L^2}  ,  \quad 
  \mbox{since $\lambda_{kx,(4)} \le 0$ and $| \lambda_{kx,(4)}| \le  \frac{4 k^2 \pi^2}{L^2}$} . 
  \label{est-2d-coefficient-lemma2-1-1}
\end{equation}  
Therefore, the following estimate is valid for a fixed $(k,l)$ and $(k_1, l_1) \ne (0,0)$: 
\begin{eqnarray} 
  & & \hspace{-0.75in}
  \left| \left( \lambda_{kx,(4)}  + \frac{4 (k+ k_1 N^*)^2 \pi^2}{L^2} \right) 
   \hat{f}_{k+k_1 N^*, l + l_1 N^*}  \right| 
  \le \frac{4 (k+k_1 N^*)^2 \pi^2}{L^2}  \left| \hat{f}_{k+ k_1 N^*, l+ l_1 N^*} \right|   \nonumber 
\\
  &\le& \left( \left( \frac{2 (k+ k_1 N^*) \pi}{L} \right)^{2} 
    +  \left( \frac{2 (l+ l_1 N^*) \pi}{L} \right)^{2} \right)^{-2}   
    \sqrt{\frac{I_{k,l}}{L^2}}   \nonumber 
\\  
  &\le& \left( \frac{4 (N^*)^2 \pi^2}{L^2} \left(  (|k_1| - \frac12)^2  
  + ( |l_1| - \frac12)^2 \right)  \right)^{-2}  
  \sqrt{\frac{I_{k,l}}{L^2}}  \nonumber 
\\
  &\le& \frac{1}{16}  h^4 \pi^{-4} \sqrt{\frac{I_{k,l}}{L^2}}  
  \frac{1}{\left( ( | k_1 | - \frac12)^2 + ( | l_1 | - \frac12)^2 \right)^{2} } ,  
    \label{est-2d-coefficient-lemma2-2}  
\end{eqnarray} 
in which the grid size $h = \frac{L}{2 N+1}$ was recalled. 
Also note that we used the following estimate in the third step 
\begin{equation} 
  \left| k + k_1 N^* \right| \ge \frac{( | k_1 | - \frac12)  N^*}{2},  \quad 
  \left| l + l_1 N^* \right| \ge \frac{( | l_1 | - \frac12) N^*}{2}, \quad 
  \mbox{for} \quad  k_1 \ne 0 , \, \, l_1 \ne 0 , 
\end{equation} 
due to the fact that $| k|, | l | \le N^*/2$.  
Consequently, its substitution into (\ref{est-2d-coefficient-3}) shows that 
\begin{eqnarray} 
  & &  \hspace{-0.5in}
   \left| \sum_{k_1,l_1=-\infty\atop (k_1,l_1) \ne (0,0)}^{\infty}  \left( \lambda_{kx,(4)} 
   + \frac{4 (k+ k_1 N^*)^2 \pi^2}{L^2} \right) 
   \hat{f}_{k+k_1 N^*, l + l_1 N^*}  \right|    \nonumber 
\\
   &\le&  \sum_{k_1,l_1=-\infty\atop (k_1,l_1) \ne (0,0)}^{\infty}  
   \left|  \left( \lambda_{kx,(4)} 
   + \frac{4 (k+ k_1 N^*)^2 \pi^2}{L^2} \right) 
   \hat{f}_{k+k_1 N^*, l + l_1 N^*}  \right|  \nonumber 
\\
   &\le&  \sum_{k_1,l_1=-\infty\atop (k_1,l_1) \ne (0,0)}^{\infty}
   \frac{1}{16}  h^4 \pi^{-4} \sqrt{\frac{I_{k,l}}{L^2}}  
  \frac{1}{\left( ( | k_1 | - \frac12)^2 + ( | l_1 | - \frac12)^2 \right)^{2}  }      
   =   C^* h^4  \sqrt{\frac{I_{k,l}}{L^2}} ,
    \label{est-2d-coefficient-lemma2-3}  
	\end{eqnarray}
where
	\begin{equation}
  C^* =  \frac{1}{16} \pi^{-4} 
\sum_{k_1,l_1=-\infty\atop (k_1,l_1) \ne (0,0)}^{\infty} 
\frac{ 1 }{\left( ( | k_1 | - \frac12)^2 + ( | l_1 | - \frac12)^2 \right)^{2} } .    
	\end{equation}  
Note that the double series 
\begin{equation} 
  \sum_{k_1,l_1=-\infty\atop (k_1,l_1) \ne (0,0)}^{\infty}  
  \frac{ 1 }{\left( ( | k_1 | - \frac12)^2 + ( | l_1 | - \frac12)^2 \right)^{\beta_0} } , 
  \label{convergent-double-1}
  	\end{equation}
with $\beta_0 =2$, is convergent. In turn, we arrive at 
\begin{eqnarray} 
  & & \hspace{-0.5in}
  \sum_{k,l=-N}^{N}  
  \left| \sum_{k_1,l_1=-\infty\atop (k_1,l_1) \ne (0,0)}^{\infty}  \left( \lambda_{kx,(4)} 
   + \frac{4 (k+ k_1 N^*)^2 \pi^2}{L^2} \right) 
   \hat{f}_{k+k_1 N^*, l + l_1 N^*}  \right|^2    \nonumber 
\\
  &\le&  \sum_{k,l=-N}^{N} (C^*)^2 h^8 \cdot \frac{I_{k,l}}{L^2}   
  = \frac{(C^*)^2}{L^2} h^8 \sum_{k,l=-N}^{N}  I_{k,l}    
   = \frac{(C^*)^2}{L^2} h^8  \nrm{\Delta^3  f}^2 
     \nonumber
     \\
&=& \frac{(C^*)^2}{L^2} h^8 \nrm{f}_{H^6}^2  ,  
  \label{est-2d-coefficient-lemma2-4}  
\end{eqnarray} 
in which the decomposition (\ref{est-2d-f-5}) was used in the second to the last step. 
Therefore, the first inequality of Lemma \ref{lem-2} 
is proven by taking 
$C_2 = \frac{(C^*)^2}{L^2}$. The second inequality of Lemma \ref{lem-2} 
can be established in the same manner. 
This completes the proof of Lemma \ref{lem-2}.

\section{Proof of Lemma~\ref{lem: inequality}}
	\label{proof:Lemma 3}

\begin{proof} 
For any periodic grid function $f$, it has a corresponding discrete Fourier transformation: 
	\begin{eqnarray}
  f_{i,j} &=& \sum^{N}_{\ell,m=-N}
\hat{f}^N_{\ell,m} {\rm e}^{2 \pi i ( \ell x_{i} + m y_{j})/ L } , \label{def:Fourier-1} 
	\end{eqnarray}
where $x_{i} = (i - \frac12 ) h$, $y_{j} = ( j - \frac12) h$, and $\hat{f}^N_{\ell,m}$ are the coefficients. In turn, for an application of the operator $-\Delta_{h, (4)}$ to $f$, the following discrete Fourier expansion is available: 
	\begin{eqnarray}
	&&
-\Delta_h f_{i,j} = \sum^{N}_{\ell,m=-N} (\nu_\ell + \nu_m) \hat{f}^N_{\ell,m}  {\rm e}^{2 \pi i  ( \ell x_{i} + m y_{j})/L}  ,   \label{inequality-1-1} 
\\	
	  &&
-\Delta_{h, (4)} f_{i,j} = \sum^{N}_{\ell,m=-N} \Lambda_{\ell, m} \hat{f}^N_{\ell,m}  {\rm e}^{2 \pi i  ( \ell x_{i} + m y_{j})/L}  ,   \label{inequality-1-2} 
\\
  &&
   \mbox{with} \, \, \nu_k = \frac{4\sin^2 {\frac{\ell\pi h}{L}}}{h^2}  ,  \, \, 
   \mu_k =  \nu_k^2 + \frac{h^2}{12} \nu_k^2 ,  \, \,    
   \Lambda_{\ell, m} = \mu_\ell + \mu_m  .   \label{inequality-1-3}    
	\end{eqnarray}
For any $f \in \mathring{\cal V}_{\rm per}$, we observe that $hat{f}^N_{\ell,m} =0$, due to the fact that $\overline{f}=0$. In turn, a similar expansion could be derived for $ (-\Delta_{h, (4)})^{-1} f$: 
	\begin{eqnarray}
	  &&
( -\Delta_{h, (4)} )^{-1} f_{i,j} = \sum_{\ell,m \ne (0,0)} \Lambda_{\ell, m}^{-1} \hat{f}^N_{\ell,m}  {\rm e}^{2 \pi i  ( \ell x_{i} + m y_{j})/L}  .   \label{inequality-2}  
	\end{eqnarray}
	
On the other hand, the following identities could be derived, based on the orthonormal property of the Fourier basis function: 
        \begin{eqnarray} 
          &&
  \| \| f \|_{-1,h}^2 = ( f , ( -\Delta_{h, (4)} )^{-1} f )_2 
  = L^2 \sum_{\ell,m \ne (0,0)}  \Lambda_{\ell, m}^{-1} | \hat{f}^N_{\ell,m} |^2 , 
  \label{inequality-3-1} 
\\ 
     &&
   \| \nabla_{h, (4)} f \|_2^2 = ( f , -\Delta_{h, (4)}  f )_2 
  = L^2 \sum_{\ell,m \ne (0,0)}  \Lambda_{\ell, m} | \hat{f}^N_{\ell,m} |^2 , 
  \label{inequality-3-2}   
 \end{eqnarray}  
for any $f \in \mathring{\cal V}_{\rm per}$. Meanwhile, an application of Parseval equality to~\eqref{def:Fourier-1} implies that 
        \begin{eqnarray} 
          &&
  \| \| f \|^2 = L^2 \sum_{\ell,m \ne (0,0)}  | \hat{f}^N_{\ell,m} |^2 . 
  \label{inequality-4} 
	\end{eqnarray}
An application of discrete Cauchy-Schwarz inequality leads to 
    \begin{eqnarray} 
     \left( \sum_{\ell,m \ne (0,0)}  | \hat{f}^N_{\ell,m} |^2 \right)^2 
     \le \left( \sum_{\ell,m \ne (0,0)}  \Lambda_{\ell, m}^{-1} | \hat{f}^N_{\ell,m} |^2 \right) 
     \cdot \left( \sum_{\ell,m \ne (0,0)}  \Lambda_{\ell, m} | \hat{f}^N_{\ell,m} |^2 \right) , 
     \label{inequality-5}    
     \end{eqnarray}    
which is equivalent to $\| f \|_2^4 \le \| f \|_{-1,h}^2 \cdot \| \nabla_{h, (4)} f \|_2^2$, for any  $f \in \mathring{\cal V}_{\rm per}$. This completes the proof of~\eqref{inequality-0-1}. 

  The proof of~\eqref{inequality-0-2} follows a similar argument. By making a comparison between~\eqref{inequality-1-1} and \eqref{inequality-1-2}, combined with an application of Parseval equality, we get     
  	\begin{eqnarray}
	&&
  \| \Delta_h f \|_2^2 =  L^2 \sum^{N}_{\ell,m=-N} (\nu_\ell + \nu_m)^2 | \hat{f}^N_{\ell,m} |^2  ,   \label{inequality-6-1} 
\\	
	  &&
 \| \Delta_{h, (4)} f \|_2 = L^2 \sum^{N}_{\ell,m=-N} \Lambda_{\ell, m}^2 | \hat{f}^N_{\ell,m} |^2 .   \label{inequality-6-2} 
     \end{eqnarray} 
Therefore, \eqref{inequality-0-2} becomes a direct consequence of the following fact: 
\begin{eqnarray} 
  | \nu_\ell + \nu_m | = \nu_\ell + \nu_m \le  \Lambda_{\ell, m} = | \Lambda_{\ell, m} | . 
  \label{inequality-7} 
     \end{eqnarray}  
The proof of Lemma~\ref{lem: inequality} is finished.  
\end{proof}

	\section*{Acknowledgements} 
This work is supported in part by the grants NSF DMS-1418689 (C.~Wang), NSF DMS-1418692 and NSF DMS-1719854 (S.~Wise). 

	\bibliographystyle{plain}
	\bibliography{draft1.bib}

	\end{document}